\newtheorem{theorem}{Theorem}[section]
\newtheorem{lemma}{Lemma}[section]
\newcommand{\hdiv}{H(\mathrm{div})}
\newcommand{\norm}[1]{\left\| #1 \right\|}
\newcommand{\seminorm}[1]{\left| #1 \right|}
\newcommand{\weightednorm}[2]{\norm{#1}_{#2}}
\newcommand{\weightedseminorm}[2]{\seminorm{#1}_{#2}}
\title{Mixed finite elements for global tide models with nonlinear damping}
\date{2 June 2017}
\author{
  Colin J. Cotter\thanks{Imperial College London, South Kensington Campus;
London SW7 2AZ.  CJC acknowledges support from NERC grant NE/I016007/1.
  }
  P. Jameson Graber\thanks{Department of Mathematics, Baylor University; One Bear Place \#97328; Waco, TX 76798-7328; USA.  PJG acknowledges support from NSF grant 1612880.}
  Robert C. Kirby\thanks{Department of Mathematics, Baylor University; One Bear Place \#97328; Waco, TX 76798-7328; USA.  RCK acknowledges support from NSF grant 1525697.}}
\begin{document}
\maketitle
\begin{abstract}
We study mixed finite element methods for the rotating
shallow water equations with linearized momentum terms but nonlinear drag.
By means of an equivalent second-order formulation, we prove long-time
stability of the system without energy accumulation.  We also give rates of damping in unforced systems and various
continuous dependence results on initial conditions and forcing
terms.
\emph{A priori} error estimates for the momentum and free
surface elevation are given in $L^2$ as well as for the time
derivative and divergence of the  momentum.  Numerical
results confirm the theoretical results regarding both energy damping
and convergence rates.
\end{abstract}

\section{Introduction}

Accurate modeling of tides is important in several scientific
disciplines.  Tides' strong impact on sediment transport and coastal
flooding makes them of interest to geologists.  Oceanographers suggest
that breaking internal tides provide a mechanism for vertical mixing
of temperature and salinity that might sustain the global ocean
circulation \cite{GaKu2007,MuWu1998}. To predict the global tides away
from coastlines, it is often sufficient to model the barotropic tide
using the rotating shallow water equations. In the open ocean, the
nonlinear advection terms have an insignificant effect on the
barotropic tide, and many models consist of the linear rotating
shallow-water equations with a parameterised drag term to model the
effects of bottom friction~\cite{stammer2014accuracy}. In
\cite{Hi_etal2011}, a linear model similar to this was solved globally
to produce boundary conditions for a more sophisticated local model.
The barotropic model can be made more sophisticated by adding
additional dissipative terms that model other dissipative mechanisms
in the barotropic tide, due to baroclinic tides, for example
\cite{JaSt2001}.

The possibility of unstructured triangular meshes make finite element
methods attractive for modelling the world's oceans, including irregulary coastlines and topography~\cite{We_etal2010}. 
Recent years have seen much discussion about mixed
finite element pairs to use as the horizontal discretization for
atmosphere and ocean models.  In papers such as~\cite{CoLaReLe2010,CoHa2011,Da2010,RoRoPo2007,RoRo2008,Ro2005,Ro2012,RoBe2009},
we see many details regarding the numerical dispersion relations obtained when
discretizing the rotating shallow water equations. Then, in~\cite{CoKi},
we took a different angle, studying the behavior of
discretizations of the forced-dissipative rotating shallow-water
equations used for predicting global barotropic tides.  In particular,
energy techniques were used to show that discrete solutions approach
the correct long-time solution in response to quasi-periodic forcing.
Since the linearized energy only controls the divergent part of the
solution, we chose finite element spaces for which there is 
a natural discrete Helmholtz decomposition and such that
the Coriolis term projects the divergent and divergence-free
components of vector fields correctly onto each other.
Hence, we used compatible, finite element
spaces (\emph{i.e.} those which arise naturally from the finite
element exterior calculus \cite{arnold2006finite}), first proposed
for numerical weather prediction in \cite{CoSh2012} and then extended
to develop finite element methods for the nonlinear rotating
shallow-water equations on the sphere that can conserve energy,
enstrophy and potential
vorticity~\cite{CoTh2014,McCo2014,RoHaCoMc2013}.  In~\cite{CoKi}, the 
discrete Helmholtz decomposition allowed us to show that
mixed finite  element discretizations of the forced-dissipative linear
rotating shallow-water equations have the correct long-time energy
behavior, and the linear nature of the equations also led to natural
optimal \emph{a priori} error estimates.

Finite element methods' ability to use unstructured grids also allows
coupling of global tide structure with local coastal dynamics. Both
discontinuous Galerkin \cite{salehipour2013higher} and continuous finite element
approaches \cite[for
  example]{FoHeWaBa1993,kawahara1978periodic,Le_etal2002} have been advocated and successfully used. The $RT_0-P_0$ (lowest order Raviart-Thomas element for 
velocity and piecewise constant for height)
was proposed for coastal tidal modeling in
\cite{walters2005coastal}; this example is included in the family of 
discretisations that we consider here.

In~\cite{CoKi}, we restricted attention to the linear bottom drag
model as originally proposed in \cite{La45}.  Quadratic damping laws
are more realistic and are what is used in barotropic tide predictions
\cite{stammer2014accuracy}, but the nonlinearity means that one cannot
simply apply a Fourier transform in time and solve for each mode
separately.  It is assumed that the system has some kind of
time-dependent attracting solution under the quasi-periodic tidal
forcing, to which all solutions converge as $t\to \infty$. Calculating
this attracting solution is the goal of barotropic tide modelling.
Then, one can solve the equations in the time domain until this
attracting solution is reached (``spun up'').  Alternatively,
\cite{Hi_etal2011} proposed an iterative method for approximating this
attracting solution by solving for pure time-periodic solutions at
different tidal frequencies, and feeding the solutions back via the
nonlinearity. In this paper, we concentrate on the former aspect,
i.e. showing that the numerical discretisation has an attracting
solution and whether this attracting solution converges to the true
attracting solution as the resolution is refined.

The nonlinearity also presents significant difficulties to the
analysis, even though it is much more benign than the advective
nonlinearity in the full equation set.  In this paper, we extend our
work in the linear case by adapting techniques from the nonlinear PDE
literature (see especially
\cite{cavalcanti2007well,lasiecka1993uniform} and references therein)
to the finite element setting.  We consider a family of damping laws
that are nonlinear for small velocity but behave linearly for large
velocity.  We require monotonicity and some other technical
assumptions on the on the nonlinearity, and these include the
quadratic case and other power laws.  As an alternative to modifying
the damping term for large velocity, \emph{a priori} assumptions (or
better, estimates) on the size of solutions would allow us to use an
unmodified law.  At any rate, provided that the velocity in fact
remains bounded, one may compute with the unmodified (i.~.e.~not
forced to be linear at infinity) law.  As with the linear case, we
believe that the applicability of our work is not limited to the
shallow water case, but to other nonlinearly damped hyperbolic systems
for which the appropriate function spaces have discrete Helmholtz
decompositions, such as damped electromagnetics or elastodynamics.

% Connect to other mixed acoustic equations.
In addition to mixed finite elements' application to tidal
models in the geophysical literature, this work also builds on 
existing literature for mixed discretization of the acoustic
equations. The first such investigation is due to
Geveci~\cite{geveci1988application}, where exact energy conservation
and optimal error estimates are given for the semidiscrete first-order
form of the model wave equation.  Later
analysis~\cite{cowsar1990priori,jenkins2003priori} considers a second 
order in time wave equation with an auxillary flux at each time step.
In~\cite{kirbykieu}, Kirby and Kieu return to the first-order
formulation, giving additional estimates
beyond~\cite{geveci1988application} and also analyzing the symplectic
Euler method for time discretization.  From the standpoint of this
literature, our model appends additional terms for
the Coriolis force and damping to the simple acoustic model.
We restrict ourselves to semidiscrete analysis in this work, but pay
careful attention the extra terms in our estimates, showing how study
of an equivalent second-order equation in $\hdiv$ proves proper
long-term behavior of the model.

In the rest of the paper, we describe the tidal model and a general
finite element discretization in Section~\ref{se:model}.
Section~\ref{se:energy} gives the three major results of this paper.
In particular, we show that for any initial data and forcing function
with a uniform time bound, the system energy also remains uniformly
bounded.  Then, we give two
continuous dependence results.  The first of these works with solutions
corresponding to identical forcing but different initial data.  In
this case, we show that the energy of the difference tends to zero
over time at a rate that depends on the particular nonlinearity.  As
corollaries of this, we obtain the existence of global attracting
solutions and also effective energy decay rates for the unforced
system.  Our second dependence result allows both the initial data and
forcing to vary, when the energy difference is bounded unformly in time by
the sum of a term that is linear in the initial energy perturbation
and nonlinear in the forcing perturbation.  In Section~\ref{se:error},
we give two kinds of \emph{a priori} error estimates.  The first,
using standard techniques, shows that the error is
optimal with the power of $h$, but the constant degrades exponentially
in time.  The second applies the continuous dependence result of
Section~\ref{se:energy} to give estimates with a generically
suboptimal power of $h$, but that hold \emph{uniformly} for all time.
Finally, we present some numerical experiments in Section~\ref{sec:num}.
As a note, our previous work~\cite{CoKi} in the linear case included
application of the techniques in~\cite{holst2012geometric} when the
domain is actually a more general manifold.  We do not include this
extension here, but the nonlinear 
should not include additional complications.

\section{Description of finite element tidal model}
\label{se:model}
We start with the nondimensional linearized rotating shallow water
model with linear forcing and a possibly nonlinear drag term on a two
dimensional surface $\Omega$, given by
\begin{equation}
\begin{split}
u_t + \frac{f}{\epsilon} u^\perp + \frac{\beta}{\epsilon^2} \nabla
\left( \eta - \eta^\prime \right) + g(u) & = 0, \\
\eta_t + \nabla \cdot \left( H u \right) & = 0,
\end{split}
\end{equation}
where $u$ is the nondimensional two dimensional velocity field tangent
to $\Omega$, $u^\perp=(-u_2,u_1)$ is the velocity rotated by $\pi/2$, $\eta$ is the nondimensional free surface elevation above
the height at state of rest, $\nabla\eta'$ is the (spatially varying)
tidal forcing, $\epsilon$ is the Rossby number (which is small for
global tides), $f$ is the spatially-dependent non-dimensional Coriolis
parameter which is equal to the sine of the latitude (or which can be
approximated by a linear or constant profile for local area models),
$\beta$ is the Burger number (which is also small),
$H$ is the (spatially varying) nondimensional fluid
depth at rest, and $\nabla$ and $\nabla\cdot$ are the intrinsic
gradient and divergence operators on the surface $\Omega$,
respectively.

The damping function $g$ is the major focus of this work.  We assume
that $g(u)$ is possibly inhomogeneous in that $g(u) = g(x, u)$,
although for simplicity we suppress the extra argument.  All bounds
given on $g$ will be assumed to hold uniformly in $x$.  Although our
main interest is a power law, we only make structural assumptions on
$g$.  At the very least, we assume
\begin{itemize}
\item Monotonicity.  For all $v$,
  \begin{equation}
    \label{eq:monot}
    g(v) \cdot v > 0.
  \end{equation}
\item Linear growth for large velocity.  There exists an $M > 0$ such that for all $|v| > 1$, we have
  \begin{equation}
    \label{eq:lineargrowth}
    |v| + |g(v)|^2 \leq M g(v) \cdot v.
  \end{equation}
\end{itemize}
These assumptions are sufficient to give long-time stability of
solutions, although the continuous dependence results will require
stronger assumptions (which still hold for $g$ of practical interest).
These will be made precise later in the paper.

We will work with a slightly generalized version of the forcing term,
which will be necessary for our later error analysis.  Instead of
assuming forcing of the form 
$\frac{\beta}{\epsilon^2} \nabla \eta^\prime$, 
we assume some $F \in L^2$, giving our model as
\begin{equation}
\begin{split}
u_t + \frac{f}{\epsilon} u^\perp + \frac{\beta}{\epsilon^2} \nabla
 \eta  + g(u) & = F, \\
\eta_t + \nabla \cdot \left( H u \right) & = 0.
\end{split}
\end{equation}

It also becomes useful to work in terms of the linearized momentum
$\widetilde{u} = H u$ rather than velocity. After making this substitution and dropping the
tildes, we obtain
\begin{equation}
\begin{split}
\frac{1}{H}u_t + \frac{f}{H\epsilon} u^\perp + \frac{\beta}{\epsilon^2} \nabla
\eta  + g(u) & = F, \\
\eta_t + \nabla \cdot  u & = 0.
\end{split}
\label{eq:thepde}
\end{equation}
A natural weak formulation of this equations is to seek $u \in \hdiv$
and $\eta \in L^2$ so that
\begin{equation}
\begin{split}
\left( \frac{1}{H}u_t , v \right) 
+ \frac{1}{\epsilon} \left( \frac{f}{H} u^\perp , v \right) 
- \frac{\beta}{\epsilon^2} \left( \eta ,
\nabla \cdot v \right) + \left( g(u) , v \right) & = 
\left( F , v \right)
, \quad \forall v \in \hdiv, \\
\left( \eta_t , w \right) + \left( \nabla \cdot u  , w \right)& = 0, \quad
\forall w \in L^2.
\end{split}
\label{eq:mixed}
\end{equation}

We now develop mixed discretizations with $V_h \subset \hdiv$ and
$W_h \subset L^2$.  Conditions on the spaces are the commuting projection
and divergence mapping $V_h$ onto $W_h$. We define $u_h:[0,T]\rightarrow V_h$
and $\eta_h :[0,T] \rightarrow  W_h$ as solutions of the discrete variational
problem
\begin{equation}
\begin{split}
\left( \frac{1}{H}u_{h,t} , v_h \right) 
+ \frac{1}{\epsilon} \left( \frac{f}{H} u_h^\perp , v_h \right) 
- \frac{\beta}{\epsilon^2} \left( \eta_h ,
\nabla \cdot v_h \right) + \left( g(u_h) , v_h \right) & =
\left( F , v_h \right)
, \\
\left( \eta_{h,t} , w_h \right) + \left( \nabla \cdot u_h  , w_h \right)& = 0.
\end{split}
\label{eq:discrete_mixed}
\end{equation}

Our analysis will proceed by working with an equivalent second-order
form.  While in the linear case~\cite{CoKi}, one readily 
obtains a second-order $\hdiv$ wave equation by differentiating the 
the first equation and using that $\nabla \cdot V_h = W_h$, this leads
to the somewhat awkward situation of differentiating through the nonlinearity.
A different approach allows us to avoid this unpleasantness.
Let satisfy the equation

\begin{equation}
  \frac{1}{H}\phi_{tt} + \frac{f}{H\epsilon} \phi_t^\perp -
  \frac{\beta}{\epsilon^2} \nabla \left( \nabla \cdot \phi \right) + g(\phi_t) = F
  \label{eq:secondorderpde}
\end{equation}
Then, we identify $u$ with $\phi_t$ and $\eta$ with $-\nabla \cdot
\phi$, and we see that solutions of~\eqref{eq:thepde}
and~\eqref{eq:secondorderpde} are in fact equivalent.  As an added
advantage over the technique in~\cite{CoKi}, the natural energy
functionals for the first- and second-order forms of the equation turn
out to coincide using this approach.

To analyze the semidiscrete setting, we need to adapt this observation
to the weak forms.  One may take the natural $\hdiv$ finite element
discretization of~\eqref{eq:secondorderpde}, seeking
$\phi_{h}: [0,T] \rightarrow V_h$ such that
\begin{equation}
  \left( \frac{1}{H}\phi_{h,tt}, v_h \right)
  + \left( \frac{f}{H\epsilon} \phi_{h,t}^\perp , v_h \right) +
  \frac{\beta}{\epsilon^2} \left( \nabla\cdot \phi_h , \nabla \cdot v_h
  \right) + \left( g(\phi_t) , v_h \right) = \left( F , v_h \right).
  \label{eq:secondorder_discrete}
\end{equation}
for all $v_h \in V_h$ for (almost) all $t \in [0,T]$.  Equivalently,
one could define $\phi_h$ to satisfy~\eqref{eq:secondorder_discrete}
and then note that standard properties of mixed finite element spaces
allow one to identify $u_h$ with $\phi_{h,t}$ and $\eta_h$ with
$\nabla \cdot \phi_h$ in~\eqref{eq:discrete_mixed}.

For the velocity space $V_h$, we will work with standard $\hdiv$ mixed
finite element spaces on triangular elements, such as Raviart-Thomas
(RT), Brezzi-Douglas-Marini (BDM), and Brezzi-Douglas-Fortin-Marini
(BDFM)~\cite{RavTho77a,brezzi1985two,brezzi1991mixed}.  We label the
lowest-order Raviart-Thomas space with index $k=1$, following the
ordering used in the finite element exterior
calculus~\cite{arnold2006finite}.  Similarly, the lowest-order
Brezzi-Douglas-Fortin-Marini and Brezzi-Douglas-Marini spaces
correspond to $k=1$ as well.  We will always take $W_h$ to consist of
piecewise polynomials of degree $k-1$, not constrained to be
continuous between cells.  We require the strong boundary condition
$u\cdot n = 0$ on all external boundaries.

Throughout, we shall let $\norm{\cdot}$ denote the standard $L^2$ norm.  We
will frequently work with weighted $L^2$ norms as well.  For a
positive-valued weight function $\kappa$, we define the weighted 
$L^2$ norm 
\begin{equation}
\weightednorm{v}{\kappa}^2
= \int_\Omega \kappa \left| v \right|^2 dx.
\end{equation}
If there exist positive constants $\kappa_*$ and $\kappa^*$ such that
$0 < \kappa_* \leq \kappa \leq \kappa^* < \infty$ 
almost everywhere, then the weighted norm is equivalent to
the standard $L^2$ norm by
\begin{equation}
\sqrt{\kappa_*} \norm{v} 
\leq \weightednorm{v}{\kappa} 
\leq \sqrt{\kappa^*} \norm{v}.
\end{equation}

A Cauchy-Schwarz inequality 
\begin{equation}
(\kappa v_1 , v_2) \leq 
\weightednorm{v_1}{\kappa}
\weightednorm{v_2}{\kappa}
\end{equation}
holds for the weighted inner product, and we can also incorporate
weights into Cauchy-Schwarz for the standard $L^2$ inner product by
\begin{equation}
(v_1,v_2) = 
(\sqrt{\kappa} v_1 , \frac{1}{\sqrt{\kappa}} v_2)
\leq \weightednorm{v_1}{\kappa} \weightednorm{v_2}{\frac{1}{\kappa}}.
\end{equation}

We refer the reader to references such as~\cite{brezzi1991mixed} for full
details about the particular definitions and properties of these
spaces, but here recall several facts essential for our analysis.  For
all velocity spaces $V_h$ we consider, the divergence maps $V_h$ onto $W_h$.
Also, the spaces of interest all have a projection, $\Pi : \hdiv
\rightarrow V_h$ that commutes with the $L^2$ projection $\pi$ into
$W_h$: 
\begin{equation}
\left( \nabla \cdot \Pi u , w_h \right)
= \left( \pi \nabla \cdot u , w_h \right)
\end{equation}
for all $w_h \in W_h$ and any $u \in \hdiv$.  
We have the error estimate
\begin{equation}
\norm{u - \Pi u} \leq C_{\Pi} h^{k+\sigma} \weightedseminorm{u}{k}
\label{eq:PiL2}
\end{equation}
when $u \in (H^{k+1})^2$.  Here, $\sigma = 1$ for the BDM spaces but
$\sigma =0$ for the RT or BDFM spaces. The projection also has an
error estimate for the divergence
\begin{equation}
\norm{ \nabla \cdot \left( u - \Pi u \right) } \leq C_\Pi h^{k} 
\weightedseminorm{\nabla \cdot u}{k}
\label{eq:PiDiv}
\end{equation}
for all the spaces of interest, whilst the pressure projection has the
error estimate 
\begin{equation}
\norm{ \eta - \pi \eta } \leq C_{\pi} h^k \weightedseminorm{\eta}{k}.
\label{eq:piL2}
\end{equation}
Here, $C_\Pi$ and $C_\pi$ are positive constants independent of $u$,
$\eta$, and $h$, although not necessarily of the shapes of the
elements in the mesh.

We will utilize a Helmholtz decomposition of $\hdiv$ under a weighted inner product.  For a very general treatment of such decompositions,
we refer the reader to~\cite{arnold2010finite}.  For each $u \in V$, there exist
unique vectors $u^D$ and $u^S$ such that $u = u^D + u^S$, $\nabla
\cdot u^S = 0$, and also $\left( \frac{1}{H} u^D , u^S \right) = 0$.
That is, $\hdiv$ is decomposed into the direct sum of the space of solenoidal
vectors, which we denote by
\begin{equation}
\mathcal{N} \left( \nabla \cdot \right)
= \left\{ u \in V : \nabla \cdot u = 0 \right\},
\end{equation}
 and its orthogonal complement under the $\left( \frac{1}{H}
  \cdot , \cdot \right)$ inner product, which we denote by
\begin{equation}
\mathcal{N} \left( \nabla \cdot \right)^\perp
= \left\{ u \in V : \left( \frac{1}{H} u , v \right) = 0, \ \forall v
  \in \mathcal{N} \left( \nabla \cdot \right) \right\}.
\end{equation}
Functions in $\mathcal{N} \left( \nabla \cdot \right)^\perp$
satisfy a generalized Poincar\'e-Friedrichs inequality, that there
exists some $C_P$ such that 
\begin{equation}
\weightednorm{u^D}{\frac{1}{H}}
\leq C_P \weightednorm{\nabla \cdot u^D}{\frac{1}{H}},
\label{eq:pf}
\end{equation}
or, via norm equivalence,
\begin{equation}
\weightednorm{u^D}{\frac{1}{H}}
\leq \frac{C_P}{\sqrt{H_*}} \norm{\nabla \cdot u^D}.
\end{equation}
Because our mixed spaces $V_h$ are contained in $H(\mathrm{div})$, the same decompositions can be applied, and the Poincar\'e-Friedrichs inequality holds with a constant no larger than $C_p$.

\section{Energy estimates}
\label{se:energy}
This section contains the major technical contributions of this paper.
We begin by considering the long-time energy boundedeness of the
system under our basic assumptions on $g$ in~\ref{ss:lts}.
Then, under more refined
assumptions, we study decay rates in~\ref{ss:decay}
and other continuous dependence results in~\ref{ss:cd}.

Throughout, we work with the energy functional
\begin{equation}
E(t) = \frac{1}{2} \weightednorm{ u_h}{\frac{1}{H}}^2 
+ \frac{\beta}{2\epsilon^2} \norm{\eta_h}^2
= \frac{1}{2} \weightednorm{\phi_{h,t}}{\frac{1}{H}}^2 
+ \frac{\beta}{2\epsilon^2} \norm{\nabla \cdot \phi_h}^2.
\label{eq:energy}
\end{equation}

It is easy to show that, absent forcing or damping ($F = g = 0$) (just
selecting $v_h = u_h$ and $w_h = \frac{\beta}{\epsilon^2}$
in~\eqref{eq:discrete_mixed} or $v_h = \phi_{h,t}$
in~\eqref{eq:secondorder_discrete}) that the the energy functional
is exactly conserved for all time.  With a nonzero damping 
satisfying~\eqref{eq:monot} and $F=0$, the energy cannot increase in time.
Just put $v_h = \phi_{h,t}$ in~\eqref{eq:secondorder_discrete} with
$F=0$ to
find that
\begin{equation}
\frac{d}{dt} E(t)
+ \left( g(\phi_{h,t}), \phi_{h,t} \right) = 0,
\end{equation}
and monotonicity gives that $\frac{d}{dt} E(t) \leq 0$.  However, this
is sufficient to show neither a rate at which $E(t) \rightarrow 0$ nor
that the damping is strong enough to give bounded energy when $F \neq
0$.  In the linear case, a more refined consideration actually gives
exponential energy decays as well as long-time stability, but such
results do not hold in the nonlinear case.

More generally, $v_h = \phi_{h,t}$ in~\eqref{eq:secondorder_discrete}
with nonzero forcing gives
\begin{equation}
  \label{eq:energy_relation}
\frac{d}{dt} E(t)
+ \left( g(\phi_{h,t}), \phi_{h,t} \right) = (F, \phi_{h,t}),
\end{equation}
and we refer to this as the \emph{energy relation} and will make
frequent use in our estimates.

%%%%%%%%%%%%%%%%%%%%%%%%%%%%%%%%%%%%%
% Long time stability or Zero Energy Growth: F bounded
%%%%%%%%%%%%%%%%%%%%%%%%%%%%%%%%%%%%%

\subsection{Long time stability}
\label{ss:lts}
We first address the question of long-time stability.  The
assumption of linear growth for large velocity will play a crucial
role here.

We begin with a simple lemma relating the damping term and some $L^2$
norms.
\begin{lemma}
  \label{lem1}
  Let $g$ satisfy~\eqref{eq:monot} and~\eqref{eq:lineargrowth}.  Then
  for all $v \in V_h$,
  \begin{equation}
    \| v \|^2 + \| g(v) \|^2
    \leq
    |\Omega| \left( 1 + g^*\right) + M \left( g(v), v \right),
  \end{equation}
  where
  \begin{equation}
    g^* \equiv \max_{|v| = 1} g(v).
  \end{equation}
\end{lemma}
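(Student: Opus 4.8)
The plan is to prove the inequality pointwise almost everywhere and then integrate over $\Omega$, splitting the domain according to the threshold $|v| = 1$ that appears in the large-velocity hypothesis \eqref{eq:lineargrowth}. Fix $v \in V_h$; since $v$ is piecewise polynomial it is bounded and the sets below are measurable, so we may write $\Omega = \Omega_1 \cup \Omega_2$ with $\Omega_2 = \left\{ x \in \Omega : |v(x)| > 1 \right\}$ and $\Omega_1$ its complement.

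On $\Omega_2$ the structural hypotheses \eqref{eq:monot}--\eqref{eq:lineargrowth} give, pointwise, the bound $|v|^2 + |g(v)|^2 \le M\, g(v) \cdot v$. Because $g(v) \cdot v \ge 0$ everywhere by monotonicity \eqref{eq:monot}, enlarging the domain of integration from $\Omega_2$ to all of $\Omega$ only increases the right-hand side, so
\[
\int_{\Omega_2} \left( |v|^2 + |g(v)|^2 \right) dx \;\le\; M \left( g(v), v \right).
\]
On $\Omega_1$ we have $|v| \le 1$ pointwise, hence $|v|^2 \le 1$, while $|g(v)|^2$ is controlled on the closed unit ball by the quantity $g^*$ furnished in the statement (finite by continuity of $g$); therefore
\[
\int_{\Omega_1} \left( |v|^2 + |g(v)|^2 \right) dx \;\le\; (1 + g^*)\,|\Omega_1| \;\le\; (1 + g^*)\,|\Omega|.
\]
Adding the two displays and using $\norm{v}^2 + \norm{g(v)}^2 = \int_{\Omega_1}(\,\cdot\,) + \int_{\Omega_2}(\,\cdot\,)$ yields $\norm{v}^2 + \norm{g(v)}^2 \le |\Omega|(1 + g^*) + M(g(v),v)$, which is the claim.

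The only genuine modelling choice in the argument is the split at $|v| = 1$; once it is made, the $\Omega_2$ piece is an immediate pointwise consequence of the hypotheses on $g$ and the $\Omega_1$ piece is a crude $L^\infty$ estimate. The one point requiring care — more a detail to verify than a real obstacle — is the $\Omega_1$ bound: one must know $g$ is continuous (or at least essentially bounded near the origin) so that $g^*$ is finite, and one should confirm that the constant on that region can honestly be expressed through the $g^*$ named in the statement. No Gronwall inequality, Poincaré--Friedrichs estimate, or finite-element-specific machinery is needed here; the lemma is purely pointwise and algebraic, and the restriction to $v \in V_h$ is invoked only so that $g(v)$ is defined pointwise.
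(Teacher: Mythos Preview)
Your proof is correct and follows essentially the same approach as the paper: split $\Omega$ at the threshold $|v|=1$, use the crude $L^\infty$ bound $1+g^*$ on the small-velocity region, use the linear-growth hypothesis on the large-velocity region, and then enlarge the latter integral to all of $\Omega$ via monotonicity. The only differences are cosmetic (your labeling of the two subsets, and whether the boundary $|v|=1$ sits in the small or large region).
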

\begin{proof}
  Let $v \in V_h$ be given.  We define
  \begin{equation}
    \begin{split}
      \Omega_0 \equiv & = \{ x \in \Omega : |v| < 1 \}, \\
      \Omega_1 \equiv & \Omega \setminus \Omega_1.
    \end{split}
  \end{equation}
  Then we calculate:
  \begin{equation}
    \begin{split}
      \| v \|^2 + \| g(v) \|^2 & = \int_\Omega |v|^2 + |g(v)|^2 \ dx
      \\
      & = \left( \int_{\Omega_0} + \int_{\Omega_1} \right)
        |v|^2 + |g(v)|^2 \ dx \\
        & \leq |\Omega_0| \left( 1 + g^* \right)
        + M \int_{\Omega_1} g(v) \cdot v \ dx.
    \end{split}
  \end{equation}
  The result follows by observing that $|\Omega_0| \leq |\Omega|$
  and that monotonicity allows us to bound the integral over
  $\Omega_1$ by that over all of $\Omega$.
\end{proof}

\begin{theorem}
  Suppose $g$ satisfies~\eqref{eq:monot} and~\eqref{eq:lineargrowth}
  and $F$ has a spatial $L^2$ norm uniformly bounded in time by
  $F^*$.  Then the energy of the solution $\phi_h$
  of~\eqref{eq:secondorder_discrete} remains uniformly bounded in time.
\end{theorem}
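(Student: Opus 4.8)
The plan is to derive a differential inequality for the energy $E(t)$ that forces it to stay bounded. First I would start from the energy relation~\eqref{eq:energy_relation}, which reads
\begin{equation*}
\frac{d}{dt} E(t) + \left( g(\phi_{h,t}), \phi_{h,t} \right) = (F, \phi_{h,t}).
\end{equation*}
The right-hand side is bounded, using Cauchy--Schwarz and norm equivalence, by $F^* \norm{\phi_{h,t}} \le (F^*/\sqrt{H_*}) \weightednorm{\phi_{h,t}}{1/H}$. The obstacle is that the naive Young's inequality estimate of $(F,\phi_{h,t})$ against $\frac{d}{dt}E$ and the damping term leaves a term proportional to $\norm{\phi_{h,t}}^2$ that is \emph{not} controlled by the dissipation $(g(\phi_{h,t}),\phi_{h,t})$ on its own --- precisely because $g$ is only nonlinear (not coercive of quadratic type) near the origin. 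This is exactly where Lemma~\ref{lem1} and the linear-growth hypothesis~\eqref{eq:lineargrowth} come in: Lemma~\ref{lem1} gives $\norm{\phi_{h,t}}^2 \le |\Omega|(1+g^*) + M(g(\phi_{h,t}),\phi_{h,t})$, so the troublesome $L^2$ norm of the velocity is absorbed into the dissipation plus an absolute constant.

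Carrying this out: I would bound $(F,\phi_{h,t}) \le F^*\norm{\phi_{h,t}} \le \frac{1}{2}\left( (F^*)^2 + \norm{\phi_{h,t}}^2 \right)$, then invoke Lemma~\ref{lem1} to replace $\norm{\phi_{h,t}}^2$, obtaining
\begin{equation*}
(F,\phi_{h,t}) \le \frac{1}{2}(F^*)^2 + \frac{1}{2}|\Omega|(1+g^*) + \frac{M}{2}\left( g(\phi_{h,t}), \phi_{h,t}\right).
\end{equation*}
Substituting into the energy relation and moving the dissipation terms to the left gives
\begin{equation*}
\frac{d}{dt}E(t) + \left(1 - \frac{M}{2}\right)\left(g(\phi_{h,t}),\phi_{h,t}\right) \le C_0,
\end{equation*}
with $C_0 = \frac{1}{2}(F^*)^2 + \frac{1}{2}|\Omega|(1+g^*)$. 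If $M \le 2$ this already shows $\frac{d}{dt}E \le C_0$, which only yields linear-in-time growth --- not good enough. To get true boundedness I need the dissipation to dominate the energy itself. The clean way is to split the damping bound more carefully: reserve a fixed fraction of $(g(\phi_{h,t}),\phi_{h,t})$, use Lemma~\ref{lem1} (with a scaled version) to bound $\weightednorm{\phi_{h,t}}{1/H}^2$ from above by a multiple of $(g(\phi_{h,t}),\phi_{h,t})$ plus a constant, and use the Poincar\'e--Friedrichs inequality~\eqref{eq:pf} applied to $\phi_h^D$ (noting $\nabla\cdot\phi_h = \nabla\cdot\phi_h^D$) to control $\norm{\nabla\cdot\phi_h}^2$ --- hence the potential-energy part of $E$ --- in terms of $\weightednorm{\phi_{h,t}}{1/H}^2$ via the second equation $\eta_{h,t} + \nabla\cdot\phi_{h,t} = 0$.

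Concretely, the \textbf{main obstacle} is bounding the potential energy $\frac{\beta}{2\epsilon^2}\norm{\nabla\cdot\phi_h}^2$, since the dissipation only directly sees $\phi_{h,t}$, not $\phi_h$. The trick I would use is a standard one for damped wave equations: introduce the perturbed functional $\widetilde{E}(t) = E(t) + \delta\left(\frac{1}{H}\phi_{h,t},\phi_h\right)$ for small $\delta>0$. Norm equivalence and Poincar\'e--Friedrichs show $\widetilde E$ is equivalent to $E$ for $\delta$ small. Differentiating, using the second-order equation~\eqref{eq:secondorder_discrete} with $v_h = \phi_h$ to evaluate $\frac{d}{dt}(\frac1H\phi_{h,t},\phi_h)$, produces a $-\frac{\beta}{\epsilon^2}\norm{\nabla\cdot\phi_h}^2$ term (the wanted coercivity in $\phi_h$) at the cost of $+\weightednorm{\phi_{h,t}}{1/H}^2$, a Coriolis cross term (which is skew, so it drops or is easily bounded), and terms $-\delta(g(\phi_{h,t}),\phi_h)$ and $\delta(F,\phi_h)$. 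These last are handled by Cauchy--Schwarz, Young, Lemma~\ref{lem1}, and Poincar\'e--Friedrichs, all absorbing into a small fraction of $\norm{\nabla\cdot\phi_h}^2$ and a fraction of $(g(\phi_{h,t}),\phi_{h,t})$ plus constants. Choosing $\delta$ small enough that the $+\weightednorm{\phi_{h,t}}{1/H}^2$ contribution is dominated by the reserved damping, one arrives at
\begin{equation*}
\frac{d}{dt}\widetilde E(t) + c\, \widetilde E(t) \le C_1
\end{equation*}
for positive constants $c, C_1$ depending only on the data ($H_*, H^*, \beta, \epsilon, C_P, M, g^*, |\Omega|, F^*$). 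Gr\"onwall's inequality then gives $\widetilde E(t) \le \widetilde E(0)e^{-ct} + \frac{C_1}{c}$, hence $\widetilde E$ --- and therefore $E$ --- is uniformly bounded in time, which is the claim. I would take care to verify that the same estimates go through with $\phi_{h,t}$ having only the regularity guaranteed by the semidiscrete ODE theory (the spatially discrete system is an ODE, so $\phi_{h,t}$ is as smooth in $t$ as needed), and that all constants are indeed independent of $h$, using that the Poincar\'e--Friedrichs constant for $V_h$ is no larger than $C_P$ as noted at the end of Section~\ref{se:model}.
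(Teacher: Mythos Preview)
Your overall strategy---augment the energy with a cross term to manufacture coercivity in $\nabla\cdot\phi_h$, show the perturbed functional is equivalent to $E$, and close with Gr\"onwall---is exactly the paper's approach. However, there is a genuine gap in your choice of cross term.

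You propose $\widetilde E(t) = E(t) + \delta\bigl(\tfrac{1}{H}\phi_{h,t},\phi_h\bigr)$ and assert that Poincar\'e--Friedrichs gives the equivalence $\widetilde E \sim E$. This fails: the energy $E(t)$ controls only $\phi_{h,t}$ and $\nabla\cdot\phi_h$, and the Poincar\'e--Friedrichs inequality~\eqref{eq:pf} applies \emph{only} to the component $\phi_h^D \in \mathcal{N}(\nabla\cdot)^\perp$. The solenoidal part $\phi_h^S$ is completely invisible to $E(t)$ and can be arbitrarily large (indeed, $\phi_h^S(t) = \phi_h^S(0) + \int_0^t u_h^S\,ds$ may grow in time even if $E$ stays bounded). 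Hence $\bigl|(\tfrac{1}{H}\phi_{h,t},\phi_h)\bigr|$ is \emph{not} bounded by a multiple of $E(t)$, and your $\widetilde E$ is not equivalent to $E$. The same defect contaminates the terms $(g(\phi_{h,t}),\phi_h)$ and $(F,\phi_h)$ that arise when you test with $v_h=\phi_h$: you cannot absorb them into $\|\nabla\cdot\phi_h\|^2$ via Poincar\'e--Friedrichs.

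The fix---and this is precisely what the paper does---is to test with $v_h = \phi_h^D$ rather than $\phi_h$, and correspondingly to use the perturbed functional $A(t) = E(t) + \text{const}\cdot(\phi_{h,t},\phi_h^D)_{1/H}$. This works because (i) $\nabla\cdot\phi_h^D = \nabla\cdot\phi_h$, so you still recover the full $\|\nabla\cdot\phi_h\|^2$ term; (ii) $\|\phi_h^D\|_{1/H} \le (C_P/\sqrt{H_*})\|\nabla\cdot\phi_h\|$ by~\eqref{eq:pf}, so the cross term \emph{is} controlled by $E(t)$ and the equivalence $A \sim E$ holds; and (iii) the integration-by-parts in time produces $-\|\phi_{h,t}^D\|_{1/H}^2$ rather than $-\|\phi_{h,t}\|_{1/H}^2$, but this is harmless since you only need an upper bound. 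Two minor points: the Coriolis cross term $(\tfrac{f}{H\epsilon}\phi_{h,t}^\perp,\phi_h^D)$ is \emph{not} skew and does not drop, but it is easily bounded by $\|\phi_{h,t}\|_{1/H}\|\nabla\cdot\phi_h\|$; and your appeal to ``the second equation $\eta_{h,t}+\nabla\cdot\phi_{h,t}=0$'' to control the potential energy is a red herring---that identity is just the definition $\eta_h = -\nabla\cdot\phi_h$ differentiated and gives no new information.
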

\begin{proof}
  We first put $v_h = \phi_h^D$ in~\eqref{eq:secondorder_discrete} to
  find
  \begin{multline} \label{eq:energybound1}
\frac{d}{dt}\left(\phi_{h,t},\phi_h^D \right)_{\frac{1}{H}} 
- \|\phi_{h,t}^D\|_{\frac{1}{H}}^2
 + \frac{1}{\epsilon}  \left(f\phi_{h,t}^\perp,\phi_h^D \right)_{\frac{1}{H}}
\\
+ \frac{\beta}{\epsilon^2} \|\nabla \cdot \phi_h\|^2 
+ \left(g(\phi_{h,t}),\phi_h^D\right) = (F,\phi_h^D).
  \end{multline}
Rearranging this and making estimates, we have
\begin{multline}
  \label{eq:energybound2}
  \frac{d}{dt}\left(\phi_{h,t},\phi_h^D \right)_{\frac{1}{H}}
  + \frac{\beta}{\epsilon^2} \|\nabla \cdot \phi_h\|^2
  + \|\phi_{h,t} \|_{\frac{1}{H}}^2
  \leq
  2 \|\phi_{h,t} \|_{\frac{1}{H}}^2
  + \frac{f^* C_P}{\epsilon} \|\phi_{h,t} \|_{\frac{1}{H}}
  \|\nabla \cdot \phi_h\| \\
  + \frac{C_P \sqrt{H^*}}{\sqrt{H_*}}
  \| g(\phi_{h,t}) \|
  \|\nabla \cdot \phi_h\|
  + \frac{C_P \sqrt{H^*}}{\sqrt{H_*}} \| F \|
    \|\nabla \cdot \phi_h\|.
\end{multline}
Then, Young's inequality on each product in the right-hand side (using
the same delta in the second and third products) gives
\begin{multline}
  \label{eq:energybound3}
  \frac{d}{dt}\left(\phi_{h,t},\phi_h^D \right)_{\frac{1}{H}}
  + \frac{\beta}{\epsilon^2} \|\nabla \cdot \phi_h\|^2
  + \|\phi_{h,t} \|_{\frac{1}{H}}^2 \\
  \leq
  \left( 2 + \frac{f^*C_P}{2\epsilon\delta_1} \right) \|\phi_{h,t} \|_{\frac{1}{H}}^2
  + \left( \frac{f^*C_P}{2} \delta_1 + \frac{C_P\sqrt{H^*}}{\sqrt{H_*}}\delta_2
  \right)  \| \nabla \cdot \phi_h \|^2 \\
  + \frac{C_P\sqrt{H^*}}{2\sqrt{H_*}\delta_2} \|F\|^2
  + \frac{C_P\sqrt{H^*}}{2\sqrt{H_*}\delta_2} \|g(\phi_{h,t})\|^2.
\end{multline}
Our goal is to hide the divergence on the left-hand side and then use
Lemma~\ref{lem1} and the energy relation~\eqref{eq:energy_relation} to handle the $L^2$ norms
of $\phi_{h,t}$ and $g(\phi_{h,t})$.  To this end, we put
\[\delta_1 = \frac{\beta}{3f^*C_P\epsilon}, \ \ \
\delta_2 = \frac{\beta\sqrt{H_*}}{3\epsilon^2 C_P \sqrt{H_*}}\]
so that
\begin{multline}
  \label{eq:energybound4}
  \frac{d}{dt}\left(\phi_{h,t},\phi_h^D \right)_{\frac{1}{H}}
  + \frac{\beta}{2\epsilon^2} \|\nabla \cdot \phi_h\|^2
  + \|\phi_{h,t} \|_{\frac{1}{H}}^2 \\
  \leq A_3 \left( \| \phi_{h,t} \|^2 + \| g(\phi_{h,t})  \|^2 \right) + A_2 \| F \|^2,
\end{multline}
where
\[
\begin{split}
  A_1 & \equiv \left( 2 + \frac{3(f^*C_P)^2}{2\beta}
  \right)\frac{1}{\sqrt{H_*}}, \\
  A_2 & \equiv \frac{3(\epsilon C_P)^2 H^*}{2\beta H_*}, \\
  A_3 & \equiv \max\{A_1, A_2\}.
\end{split}
\]
Then, Lemma~\ref{lem1} gives
\begin{multline}
  \label{eq:energybound5}
  \frac{d}{dt}\left(\phi_{h,t},\phi_h^D \right)_{\frac{1}{H}}
  + \frac{\beta}{2\epsilon^2} \|\nabla \cdot \phi_h\|^2
  + \|\phi_{h,t} \|_{\frac{1}{H}}^2
  \\ \leq A_4 + \tilde{A}_3 M \left( g(\phi_{h,t}), \phi_{h,t} \right)
  + A_2 \| F \|^2,
\end{multline}
where
\begin{equation}
A_4 \equiv A_3 |\Omega| \left( 1 + g^* \right),
\end{equation}
and $\tilde{A}_3 \geq A_3$ will be fixed later.  Applying the energy
relation leads to
\begin{multline}
  \frac{d}{dt}\left[ \tilde{A}_3 M E(t) + \left(\phi_{h,t},\phi_h^D
    \right)_{\frac{1}{H}} \right]
  + \frac{\beta}{2\epsilon^2} \|\nabla \cdot \phi_h\|^2
  + \|\phi_{h,t} \|_{\frac{1}{H}}^2
\\ \leq A_4 + \tilde{A}_3M\left(F, \phi_{h,t}\right) + A_2 \| F \|^2.
\end{multline}
Now, a weighted Young's inequality and norm equivalences allow us to
write
\begin{multline}
    \frac{d}{dt}\left[ \tilde{A}_3 M E(t) + \left(\phi_{h,t},\phi_h^D
      \right)_{\frac{1}{H}} \right] + E(t)
    \leq A_4 + \left[ \frac{(\tilde{A}_3M)^2 H^*}{2}+A_2\right] \|F \|^2.
\end{multline}
We divide through by $A_5 \equiv \tilde{A}_3M$ and define
\begin{equation}
  A(t) = E(t) + \frac{1}{A_5} \left( \phi_{h,t}, \phi_h^D
  \right)_{\frac{1}{H}}
\end{equation}
so that
\begin{equation}
  \label{eq:almostthere}
  \frac{d}{dt} A(t) + \frac{1}{A_5} E(t) \leq A_6 + A_7 \|F\|^2,
\end{equation}
where
\begin{equation}
  A_6 \equiv \frac{A_4}{A_5}, \ \
  A_7 \equiv \frac{(\tilde{A}_3M)^2H^* + 2A_2}{A_5}.
\end{equation}
At this point, we have an ordinary differential inequality, and we are
able to choose $\tilde{A}_3$ in order to guarantee an equivalence
between $A(t)$ and $E(t)$.  

Since we observe that
\begin{equation}
\left|\left(\phi_{h,t},\phi_h^D \right)_{\frac{1}{H}}\right| \leq \frac{C_P}{\sqrt{H_*}}\|\phi_{h,t}\|_{\frac{1}{H}}\|\nabla \cdot \phi\| \leq \frac{C_P \epsilon}{\sqrt{\beta H_*}}E(t),
\end{equation}
we set
\begin{equation}
\tilde A_3 = \max\left\{A_3,\frac{2C_P \epsilon}{M\sqrt{\beta H_*}} \right\},
\end{equation}
which readily gives that
\begin{equation}
\label{eq:equivalence}
\frac{1}{2}E(t) \leq A(t) \leq \frac{3}{2}E(t).
\end{equation}

At this point, we use this equivalence to
convert~\eqref{eq:almostthere} to an ordinary differential inequality for
$A(t)$ to determine
\begin{equation}
  A(t) \leq e^{-\frac{2t}{3A_5}} A(0) + \int_0^t
  e^{\frac{2(s-t)}{3A_5}} \left( A_6 + A_7 \| F \|^2 \right) ds
\end{equation}
and hence
\begin{equation}
  E(t) \leq e^{-\frac{2t}{3A_5}} E(0) + \int_0^t
  e^{\frac{2(s-t)}{3A_5}} \left( A_6 + A_7 \| F \|^2 \right) ds.
\end{equation}
Finally, computing the integral and using that $\| F\|^2 \leq F^*$
gives
\begin{equation}
E(t) \leq e^{-\frac{2t}{3A_5}} E(0) +
\frac{3A_5}{2} \left( 1-e^{-\frac{2t}{3A_5}} \right) \left( A_6 + A_7
F^* \right)
\end{equation}
for all time.
\end{proof}
This result demonstrates that our model remains stable for all times.
The bound eventually becomes independent of the initial energy,
although this does not yet prove the existence of an attracting
solution.
Also, note that, as $F^* \rightarrow 0$, we only obtain an
$\mathcal{O}(1)$ bound on $E(t)$.  Looking ahead to error estimation,
this result could be used to show that error remains uniformly bounded
in time, but cannot be used to establish convergence rates as $h
\rightarrow 0$. 

\subsection{Decay rates}
\label{ss:decay}
Now, we turn back to the question of $F = 0$ and determine that any
initial energy must decay toward 0 at a rate that is determined by
features of the nonlinearity.  To establish this will require
stronger assumptions on the nonlinearity $g$.  However, we will
actually prove a result on differences of solutions subject to
identifical forcing but different initial data.  This will establish
decay rates and rates of convergence to a global attracting solution.

In particular, we now require that $g$ is a continuous function (of
both variables) and that
\begin{itemize}
  \item Mononoticity:
    \begin{equation}
      \label{eq:monotone-strict}
      (g(v) - g(w))\cdot(v-w) > 0
    \end{equation}
    for all $v \neq w$, uniformly in the implicit $x$-dependence.
  \item
    Linear growth also holds on differences.  That is, for some
    $M > 0$ 
    \begin{equation}
      \label{eq:linearatinfinity}
      |v-w|^2, |g(v)-g(w)|^2 \leq Mg(v-w)\cdot (v-w)
    \end{equation}
    for all $|v|, |w| \geq 1$, again uniformly in $x$.
\end{itemize}

\emph{Remark:} If one were interested only in decay rates for a single solution given $F = 0$, then \eqref{eq:monotone-strict} could be reduced to $g(v) \cdot v > 0$ for all $v \neq 0$, and \eqref{eq:linearatinfinity} could be analogously reduced.

The technique used in this section was first developed by Lasiecka and Tataru in \cite{lasiecka1993uniform}, where the main purpose was to prove the existence of uniform decay rates for the wave equation with nonlinear boundary damping.
See also \cite{cavalcanti2007well} for an extension of the method as well as an overview of the relevant PDE literature.
Our main interest in \cite{lasiecka1993uniform} is that it provides an algorithm which takes the profile of \emph{any} monotone damping function $g$ and produces an explicit uniform decay rate for the energy.
While most natural examples of $g$ have the structure of a power law, the existence of a decay rate is in fact generic; it depends only on the fact that $g$ is monotone and sufficiently dampens high velocities.

\subsubsection{Some lemmas}

Our results will depend on a few technical lemmas.
The first lemma appears in \cite{lasiecka1993uniform} as a brief remark, but there it is applied only to the case where $g$ is a scalar monotone function.
Here we generalize to the case where $g$ is a vector field.
\begin{lemma} \label{lem:auxiliaryfunction2}
	Let $g = g(x,v)$ be a continuous function on $\overline{\Omega} \times \mathbb{R}^d$, where $\Omega$ is a bounded domain, satisfying  \eqref{eq:monotone-strict} and \eqref{eq:linearatinfinity}.
	Then there exists an increasing, concave function $J:[0,\infty) \to [0,\infty)$ such that $J(0) = 0$ and
	\begin{equation} \label{eq:J2}
	|v-w|^2 + |g(v)-g(w)|^2 \leq J((v-w) \cdot (g(v)-g(w))) \ \ \ \forall |w|,|v| \leq 1, \ \forall x \in \overline{\Omega}.
	\end{equation}
\end{lemma}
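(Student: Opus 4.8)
The plan is to carry out the Lasiecka--Tataru envelope construction, adapted to the vector-valued, $x$-dependent damping. First I would encode all the relevant information about $g$ near the origin into a single scalar ``profile'' function. For $s \ge 0$, set
\[
h(s) = \sup\bigl\{\, |v-w|^2 + |g(x,v)-g(x,w)|^2 \ :\ x \in \overline\Omega,\ |v|,|w|\le 1,\ (v-w)\cdot(g(x,v)-g(x,w)) \le s \,\bigr\},
\]
where the supremum is over a nonempty set since $v=w$ is always admissible, so $h \ge 0$ and $h(0)$ makes sense. Because $g$ is continuous on the compact set $\overline\Omega \times \overline{B(0,1)}$, the quantity being maximized is bounded there by some constant $C_0$, so $h$ is finite (indeed $0 \le h \le C_0$), and $h$ is non-decreasing by construction. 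The pairing $(v-w)\cdot(g(x,v)-g(x,w))$ is itself bounded on the same compact set by some $s_{\max} \ge 0$; if $s_{\max}=0$ then \eqref{eq:monotone-strict} forces $g(x,\cdot)$ to be constant for each $x$ and the statement is trivial with $J\equiv 0$, so from now on $s_{\max}>0$ and only the behaviour of $h$ on $[0,s_{\max}]$ is relevant.

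The key step, and the one place strict monotonicity enters, is to show $\lim_{s\to 0^+} h(s) = 0$. I would argue by contradiction and compactness. If there were $s_n \downarrow 0$ with $h(s_n) \ge \delta > 0$, choose near-maximizers $x_n \in \overline\Omega$ and $|v_n|,|w_n|\le 1$ with $(v_n-w_n)\cdot(g(x_n,v_n)-g(x_n,w_n)) \le s_n$ and $|v_n-w_n|^2 + |g(x_n,v_n)-g(x_n,w_n)|^2 > \delta/2$. Passing to a subsequence, $(x_n,v_n,w_n) \to (x,v,w)$ in the compact set $\overline\Omega \times \overline{B(0,1)} \times \overline{B(0,1)}$; continuity of $g$ gives $(v-w)\cdot(g(x,v)-g(x,w)) = 0$, so $v=w$ by \eqref{eq:monotone-strict}, whence $|v-w|^2 + |g(x,v)-g(x,w)|^2 = 0$, contradicting the fact that this limit is $\ge \delta/2$. (Note that \eqref{eq:linearatinfinity} is not used here: for $|v|,|w|\le 1$ only continuity, compactness, and strict monotonicity matter; it is listed among the hypotheses only for uniformity with the companion large-velocity estimates.) Setting $h(0)=0$, the function $h$ is thus non-decreasing, bounded, and right-continuous at $0$.

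Finally I would pass to the least concave majorant. Let $J$ be the concave envelope of $h$ on $[0,s_{\max}]$, i.e. $J(s) = \inf\{\, a+bs \ :\ a,b\in\mathbb{R},\ a+br \ge h(r)\ \forall r\in[0,s_{\max}] \,\}$, which is concave and satisfies $J \ge h$; since $h$ is non-decreasing, so is $J$ (the decreasing affine majorants of $h$ are all bounded below on $[0,s_{\max}]$ by $h(s_{\max})$, so they do not affect monotonicity). The point that must be checked is $J(0)=0$: clearly $J(0)\ge h(0)=0$, and conversely $J(0) = \inf_b \sup_{r\in[0,s_{\max}]}(h(r)-br)$, where, given $\epsilon>0$, one picks $\eta>0$ with $h\le \epsilon$ on $[0,\eta]$ and then takes $b$ large enough that $\sup_{[\eta,s_{\max}]}(h(r)-br) \le C_0 - b\eta < 0$ while $\sup_{[0,\eta]}(h(r)-br)\le\epsilon$, giving $J(0)\le\epsilon$; since $\epsilon$ is arbitrary, $J(0)=0$. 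Extending $J$ to all of $[0,\infty)$ by its supporting line (or by the constant $J(s_{\max})$) for $s>s_{\max}$ keeps it concave and non-decreasing, and replacing $J$ by $J+\delta s$ for any $\delta>0$ makes it strictly increasing if desired while preserving $J(0)=0$ and $J\ge h$. Then \eqref{eq:J2} is immediate: for $x\in\overline\Omega$ and $|v|,|w|\le 1$, put $s=(v-w)\cdot(g(x,v)-g(x,w))\in[0,s_{\max}]$; by the definition of $h$ and $J\ge h$ we get $|v-w|^2+|g(x,v)-g(x,w)|^2 \le h(s) \le J(s)$.

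I expect the main obstacle to be the compactness argument giving $h(0^+)=0$ together with the subsequent verification that the concave envelope still vanishes at the origin; the remaining steps are routine. A secondary subtlety worth flagging is the degenerate case $s_{\max}=0$ (damping independent of velocity), which must be excluded or dispatched separately.
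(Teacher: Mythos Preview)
Your proof is correct and complete; the compactness argument for $h(0^+)=0$ and the verification that the concave envelope still vanishes at the origin are both sound, and the remaining steps are indeed routine.

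Your route differs from the paper's in execution, though the overall scheme (build a scalar profile, take its concave hull) is the same. The paper parametrizes by a base point $v\in B_1$ and a unit direction $e\in\partial B_1$, setting $h_{v,e}(s)=se\cdot(g(v+se)-g(v))$ and $j_{v,e}(s)=s^2+\max_{0\le t\le s}|g(v+te)-g(v)|^2$, proves these are strictly increasing, inverts $h_{v,e}$, and shows the composite $j_{v,e}\circ h_{v,e}^{-1}$ depends continuously on $(v,e)$ so that its maximum $j(t)$ over $B_1\times\partial B_1$ is finite; $J$ is then the concave envelope of $j$ on a suitable bounded interval. This follows the original Lasiecka--Tataru scalar construction more literally, at the cost of the inverse-function continuity argument. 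Your approach bypasses the ray parametrization entirely by defining $h(s)$ directly as a supremum over admissible pairs and using a single sequential-compactness argument to get $h(0^+)=0$; this is shorter and arguably cleaner, and it makes the role of strict monotonicity \eqref{eq:monotone-strict} more transparent. The paper's version, on the other hand, yields a slightly more explicit $j$ that can be computed directly in the power-law examples that follow.
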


\begin{proof}
	Let $B_1 = \{v \in \mathbb{R}^d : |v| \leq 1\}$ and let $\partial B_1$ be its boundary.
	For $v \in B_1, e \in \partial B_1$ and $x \in \overline{\Omega}$, set
	$$
	h_{v,e}(s) = se \cdot (g(v+se)-g(v)), \ \ j_{v,e}(s) = s^2 + \max\{|g(v+te)-g(v)|^2 : 0 \leq t \leq s\}.
	$$
	Note that both functions are strictly increasing in $s$; $j_{e,x}$ is the sum of two increasing functions, one of them strictly increasing, while in the case of $h_{v,e,x}(s)$, we use \eqref{eq:monotone-strict} to check:
	\begin{align*}
	s > t \ \Rightarrow h_{v,e}(s) &- h_{v,e}(t) 
	\\
	&= se \cdot (g(v+se)-g(v)) - te \cdot (g(v+te)-g(v))\\
	&> (s-t)e \cdot (g(v+te)-g(v)) \geq 0.
	\end{align*}
	Moreover by \eqref{eq:linearatinfinity} we have that $h_{v,e}(s) \to \infty$ as $s \to \infty$.
	Let $h_{v,e}^{-1}:[0,\infty) \to [0,\infty)$ be the inverse function of $h_{v,e}(\cdot)$.
	Our goal is to show that 
	$$
	j(t) := \max_{(v,e) \in B_1 \times \partial B_1 } j_{v,e}(h_{e}^{-1}(t))
	$$
	exists (that is, it is finite for all $t$).
	To do this, it is sufficient to see that $j_{v,e}(s)$ and
        $h_{v,e}^{-1}(t)$ are both continuous in the stripe $(v,e)$
        (uniformly in $x$).
	The continuity of $(v,e) \mapsto j_{v,e}(s)$ follows in a straightforward manner from the uniform continuity of $g$ on compact sets.
	Likewise, $h_{v,e}(s)$ is continuous in $(v,e)$.
	To see that $h_{v,e}^{-1}(t)$ is continuous in $(v,e)$, we assume to the contrary that there exists some sequence $(v_n,e_n) \in B_1 \times \partial B_1 $ such that $(v_n,e_n) \to (v,e)$ while $|h_{v_n,e_n}^{-1}(t) - h_{v,e}^{-1}(t)| \geq \epsilon$.
	Let $s_n = h_{v_n,e_n}^{-1}(t)$ and $s = h_{e}^{-1}(t)$.
	There are two cases:
	\begin{enumerate}
		\item[1:] $s_n \geq s + \epsilon$ (up to a subsequence).
		Since $h_{e_n}$ and $h_{e}$ are strictly increasing, it follows that $h_{e_n}(s_n) \geq h_{e_n}(s+\epsilon) \to h_{e}(s+\epsilon) > h_{e}(s)$.
		But this implies $t > t$, a contradiction.
		
		\item[2:] $s_n \leq s - \epsilon$ (up to a subsequence).
		We have $h_{e_n}(s_n) \leq h_{e_n}(s-\epsilon) \to h_{e}(s-\epsilon) < h_{e}(s)$, so $t < t$, a contradiction.
	\end{enumerate}
	We now see that $h_{v,e}^{-1}(t)$ is continuous in $(v,e)$ for every $t \geq 0$.
	
	To complete the proof, observe that $j(t)$ is well-defined and finite for all $t \geq 0$, that $j(0) = 0$, and $j$ is increasing.
	Set
	\begin{multline*}
	t_1 := \max\{(v-w) \cdot (g(v)-g(w)) : v,w \in B_1\} 
	\\
	= \max\{h_{v,e}(s) : v,v+se \in B_1\}.
	\end{multline*}
	Finally, let $J$ be the concave envelope of $j$ restricted to $[0,t_1]$ (and constant on $[t_1,\infty)$).
	Then $J$ satisfies all the desired properties.
\end{proof}

The function $J$ derived in Lemma \ref{lem:auxiliaryfunction2} determines the decay rates via an ordinary differential equation \eqref{eq:ode-final}.
Loosely speaking, it determines how much the damping is able to ``coerce" the energy.
We note that \eqref{eq:J2} only applies to vectors in the unit ball.
On the other hand, for vectors outside the unit ball, we can use the structure assumed in \eqref{eq:linearatinfinity}.
For the case when one vector is inside the unit ball while the other is outside, we will appeal to this elementary lemma, which is a corollary of \eqref{eq:monotone-strict}:

\begin{lemma} \label{lem:elementary}
	Given the above assumptions on $g$, then if $|v| \geq 1$ and $|w| < 1$ (or vice versa), we have
	\begin{multline} \label{eq:elementary}
	|v-w|^2,|g(v) - g(w)|^2
	\\ \leq 2M((g(v)-g(w))\cdot (v-w)) + 2J((g(v)-g(w))\cdot (v-w))
	\end{multline}
	for all $x \in \Omega$.
	\end{lemma}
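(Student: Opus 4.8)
The plan is to reduce the mixed case (one vector inside the unit ball, one outside) to a combination of the two regimes already under control: the near-field estimate~\eqref{eq:J2} from Lemma~\ref{lem:auxiliaryfunction2} and the far-field estimate~\eqref{eq:linearatinfinity}. Suppose $|v| \ge 1$ and $|w| < 1$; the other case is symmetric. The key geometric observation is that the segment from $w$ to $v$ crosses the unit sphere, so there is a point $z$ on that segment with $|z| = 1$, say $z = w + \theta(v - w)$ for some $\theta \in [0,1]$. Both $w$ and $z$ lie in the closed unit ball, so~\eqref{eq:J2} applies to the pair $(w, z)$; and both $z$ and $v$ have norm $\ge 1$, so~\eqref{eq:linearatinfinity} applies to the pair $(z, v)$.

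First I would split $v - w = (v - z) + (z - w)$ and $g(v) - g(w) = (g(v) - g(z)) + (g(z) - g(w))$, and use the elementary inequality $|a + b|^2 \le 2|a|^2 + 2|b|^2$ to bound
\[
|v - w|^2 + |g(v) - g(w)|^2 \le 2\left( |v - z|^2 + |g(v) - g(z)|^2 \right) + 2\left( |z - w|^2 + |g(z) - g(w)|^2 \right).
\]
By~\eqref{eq:linearatinfinity} the first bracket is at most $M(g(v) - g(z)) \cdot (v - z)$, and by~\eqref{eq:J2} the second bracket is at most $J\big((g(z) - g(w)) \cdot (z - w)\big)$. The remaining task is to relate these two ``partial'' monotonicity pairings back to the full pairing $(g(v) - g(w)) \cdot (v - w)$. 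Because $z$ lies on the segment $[w,v]$, we have $v - z = (1-\theta)(v-w)$ and $z - w = \theta(v - w)$, so both $(v-z)$ and $(z-w)$ are nonnegative scalar multiples of $(v - w)$. Then monotonicity~\eqref{eq:monotone-strict}, applied to each of the pairs $(v,z)$ and $(z,w)$, together with the fact that these are collinear sub-pairings, gives $(g(v) - g(z)) \cdot (v - z) \le (g(v) - g(w)) \cdot (v - w)$ and similarly $(g(z) - g(w)) \cdot (z - w) \le (g(v) - g(w)) \cdot (v - w)$ — the telescoping identity $(g(v)-g(w))\cdot(v-w) = (g(v)-g(z))\cdot(v-w) + (g(z)-g(w))\cdot(v-w)$ combined with the sign of each term (each of which is nonnegative by monotonicity since $v-z, z-w$ are positive multiples of $v-w$) makes both pieces dominated by the whole. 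Since $J$ is increasing, $J\big((g(z)-g(w))\cdot(z-w)\big) \le J\big((g(v)-g(w))\cdot(v-w)\big)$, and substituting both bounds yields exactly~\eqref{eq:elementary}.

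The main obstacle is the collinearity bookkeeping in the last step: one must be careful that the cross terms in the telescoping expansion of $(g(v)-g(w))\cdot(v-w)$ have the right sign, which is where~\eqref{eq:monotone-strict} is used not just for the pairs $(v,z)$ and $(z,w)$ but for the ``mixed'' pairings $(g(v)-g(z))\cdot(z-w)$ and $(g(z)-g(w))\cdot(v-z)$ — these are again nonnegative because $z - w$ and $v - z$ are positive multiples of $v - w$ and hence of each other. Apart from this, the argument is a routine decomposition, and no continuity or compactness input beyond what Lemma~\ref{lem:auxiliaryfunction2} already provides is needed.
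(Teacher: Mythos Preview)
Your argument is essentially the paper's own proof: introduce the point $z$ (the paper calls it $v_\lambda$) on the segment $[w,v]$ with $|z|=1$, split via $|a+b|^2\le 2|a|^2+2|b|^2$, apply \eqref{eq:linearatinfinity} to the pair $(v,z)$ and \eqref{eq:J2} to $(z,w)$, and then use monotonicity plus collinearity to absorb the partial pairings into $(g(v)-g(w))\cdot(v-w)$. One small slip: the lemma bounds $|v-w|^2$ and $|g(v)-g(w)|^2$ \emph{separately}, and \eqref{eq:linearatinfinity} gives each of $|v-z|^2,\ |g(v)-g(z)|^2\le M(\cdot)$, so your ``first bracket'' is at most $2M(\cdot)$, not $M(\cdot)$; treating the two quantities separately (as the paper does) recovers the stated constant $2M$ rather than $4M$.
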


\begin{proof}
	Let $|v| \geq 1$ and $|w| < 1$.
	Set $v_\lambda = \lambda w + (1-\lambda)v$ and fix $\lambda \in (0,1]$ such that $|v_\lambda| = 1$.
	Using the identities $v-v_\lambda = \lambda(v-w)$ and $v_\lambda - w = (1-\lambda)(v-w)$, the fact that $g(x,\cdot)$ is monotone and satisfies \eqref{eq:linearatinfinity}, and Lemma \ref{lem:auxiliaryfunction2}, we get
	\begin{align*}
	|v-w|^2 &\leq 2|v-v_\lambda|^2 + 2|v_\lambda-w|^2\\
	&\leq 2M(g(v)-g(v_\lambda))\cdot (v-v_\lambda) + 2J((g(v_\lambda)-g(w))\cdot (v_\lambda-w))
	\\
	&\leq 2M(g(v)-g(w))\cdot (v-w) + 2J((g(v)-g(w))\cdot (v-w))
	\end{align*}
	The second part of \eqref{eq:elementary} is similar, and we omit the details.
	\end{proof}

\subsubsection{Derivation of decay rates}

Let $\phi_{1,h},\phi_{2,h}$ be two solutions of \eqref{eq:secondorder_discrete} with different initial data.
Set $\phi_h = \phi_{1,h}-\phi_{2,h}$.
Then $\phi_h$ satisfies
\begin{multline}
\left( \frac{1}{H}\phi_{h,tt}, v_h \right)
+ \left( \frac{f}{H\epsilon} \phi_{h,t}^\perp , v_h \right) +
\frac{\beta}{\epsilon^2} \left( \nabla\cdot \phi_h , \nabla \cdot v_h
\right) 
\\
+ \left( g(\phi_{1,h,t})-g(\phi_{2,h,t}) , v_h \right) = 0.
\label{eq:secondorder_discrete_differences}
\end{multline}
for all $v_h \in V_h$ for (almost) all $t \in [0,T]$.
We will once again define
\begin{equation}
E(t) 
= \frac{1}{2} \weightednorm{\phi_{h,t}}{\frac{1}{H}}^2 
+ \frac{\beta}{2\epsilon^2} \norm{\nabla \cdot \phi_h}^2,
\label{eq:energy-differences}
\end{equation}
and again we have the energy identity
\begin{equation}
  \label{eq:enid}
\frac{d}{dt} E(t)
+ \left( g(\phi_{1,h,t})-g(\phi_{2,h,t}), \phi_{h,t} \right) = 0.
\end{equation}

Our main theorem of this section bounds the energy by the
solution of an ordinary differential equation, where this equation is
obtained in terms of the concave function $J$ given above.  For
particular choices of $g$, one may explicitly compute $J$ and hence
the solution of the ODE.  Examples will follow after the theorem.

\begin{theorem}
  Let $E(t)$ be defined in~\eqref{eq:enid}. 
  Then for all $t \geq T$, the energy $E(t)$ satisfies
  \begin{equation}
    E(t) \leq S\left(\frac{t}{T} - 1\right),
  \end{equation}
  where $S$ is the solution to
  \begin{equation}
    S^\prime(t) + |\Sigma| J^{-1} \left( \frac{S(t)}{D_J} \right) = 0,
    \ \ S(0) = E(0),
  \end{equation}
  and where
  \begin{equation}
    \begin{split}
      T & := 2\frac{C_P\sqrt{\beta}}{\epsilon \sqrt{H_*}}, \\
      |\Sigma| & := |\Omega| T. \\
      D_1 & := 2M\left(\frac{3}{2}+\frac{f^* C_P^2}{\beta H_*} \right)\frac{1}{H_*}
 + 2M\frac{C_P^2 H^* \epsilon^2}{\beta H_*}, \\
 D_2 & := 2\left(\frac{3}{2}+\frac{f^* C_P^2}{\beta H_*}
 \right)\frac{1}{H_*} + 2\frac{C_P^2 H^* \epsilon^2}{\beta H_*}, \\
 \tilde D_1 & := \frac{2C_P\sqrt{\beta}}{\epsilon \sqrt{H_*}}+ D_1
= \frac{2C_P\sqrt{\beta}}{\epsilon \sqrt{H_*}}+ 2M\left(\frac{3}{2}+\frac{f^* C_P^2}{\beta H_*} \right)\frac{1}{H_*}
+ 2M\frac{C_P^2 H^* \epsilon^2}{\beta H_*} \\
 D_J & := \left(1+ \tilde D_1\right)\frac{E(0)}{J\left(\frac{E(0)}{|\Sigma|}\right)}
+ D_2|\Sigma|.
    \end{split}
  \end{equation}
\end{theorem}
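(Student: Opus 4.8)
The proof follows the method of Lasiecka and Tataru~\cite{lasiecka1993uniform}: over a single time window of length $T$ we derive an observability-type estimate that bounds $E(T)$ by the damping work done on $[0,T]$, expressed through the concave function $J$ and the dissipated energy $E(0)-E(T)$; we then iterate the resulting recursion over consecutive windows and compare it with the stated ODE for $S$.

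First I would record two consequences of the energy identity~\eqref{eq:enid}: strict monotonicity~\eqref{eq:monotone-strict} forces $E$ to be non-increasing, and integrating~\eqref{eq:enid} over $[a,b]$ gives $\int_a^b\int_\Omega (g(\phi_{1,h,t})-g(\phi_{2,h,t}))\cdot\phi_{h,t}\,dx\,dt = E(a)-E(b)$. The main analytic step is the observability estimate. Testing~\eqref{eq:secondorder_discrete_differences} with $v_h=\phi_h^D$, integrating over $[0,T]$, and using the Poincar\'e--Friedrichs inequality~\eqref{eq:pf} to control $\weightednorm{\phi_h^D}{\frac1H}$ by $\norm{\nabla\cdot\phi_h}$ — together with weighted Young inequalities for the Coriolis and damping terms, exactly as in the long-time stability theorem — I would obtain, after using $\int_0^T E(t)\,dt\ge T\,E(T)$, an estimate of the form
\[
T\,E(T) \;\le\; \left[\left(\phi_{h,t},\phi_h^D\right)_{\frac1H}\right]_{t=0}^{t=T}
\;+\; D_1'\!\int_0^T\!\!\int_\Omega\!\big(|\phi_{h,t}|^2 + |g(\phi_{1,h,t})-g(\phi_{2,h,t})|^2\big)\,dx\,dt .
\]
The value $T = 2C_P\sqrt{\beta}/(\epsilon\sqrt{H_*})$ is chosen precisely so that the endpoint terms, which are bounded by fixed multiples of $E(0)$ and $E(T)$ by the computation already used in the stability proof, can be absorbed; carefully tracking these manipulations (and the factor $1/T$) is what pins down the constants $D_1$, $D_2$, and $\tilde D_1 = T+D_1$.

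Next I would feed in the pointwise nonlinear bounds. Splitting $\Omega$ at each time into the set where $|\phi_{1,h,t}|,|\phi_{2,h,t}|\le 1$ (apply Lemma~\ref{lem:auxiliaryfunction2}), the set where both exceed $1$ (apply~\eqref{eq:linearatinfinity}), and the mixed set (apply Lemma~\ref{lem:elementary}), and using that $J$ is increasing with $J(0)=0$ while the damping density $D(x,t):=\phi_{h,t}\cdot(g(\phi_{1,h,t})-g(\phi_{2,h,t}))$ is nonnegative, one gets the uniform pointwise inequality $|\phi_{h,t}|^2 + |g(\phi_{1,h,t})-g(\phi_{2,h,t})|^2 \le 2M\,D + 2J(D)$ (enlarging $M$ if necessary). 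Integrating over $\Omega\times[0,T]$, applying Jensen's inequality with the concavity of $J$ on the measure space of total mass $|\Sigma|=|\Omega|T$, and replacing $\int_0^T\int_\Omega D\,dx\,dt$ by $E(0)-E(T)$ via the energy identity, the observability estimate collapses to $E(T)\le D_1\,(E(0)-E(T)) + D_2\,|\Sigma|\,J\!\big((E(0)-E(T))/|\Sigma|\big)$. Since $0\le E(0)-E(T)\le E(0)$ and $J$ is concave with $J(0)=0$, the bound $J(\theta s)\ge\theta J(s)$ for $\theta\in[0,1]$ lets me replace the linear term by $\tfrac{E(0)}{J(E(0)/|\Sigma|)}\,J\!\big((E(0)-E(T))/|\Sigma|\big)$, which after collecting constants gives $E(T)\le D_J\,J\!\big((E(0)-E(T))/|\Sigma|\big)$, i.e.\ $E(0)-E(T)\ge |\Sigma|\,J^{-1}(E(T)/D_J)$.

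The derivation is invariant under time translation, so the same inequality holds with $0,T$ replaced by $mT,(m+1)T$; writing $s_m=E(mT)$ this reads $s_m - s_{m+1}\ge |\Sigma|\,J^{-1}(s_{m+1}/D_J)$, the discrete analogue of $S'+|\Sigma|J^{-1}(S/D_J)=0$, $S(0)=E(0)$. A standard sequence-versus-ODE comparison argument (cf.~\cite{lasiecka1993uniform}) then gives $s_m\le S(m)$ for all $m$, and for arbitrary $t\ge T$ I would choose the integer $m\ge 1$ with $t\in[mT,(m+1)T]$ and combine monotonicity of $E$, monotonicity of $S$, and $m\ge t/T-1$ to conclude $E(t)\le E(mT)\le S(m)\le S(t/T-1)$. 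The main obstacle is the observability/multiplier estimate in the mixed $\hdiv$ setting: one must carry the Coriolis term and the nonlinear difference $g(\phi_{1,h,t})-g(\phi_{2,h,t})$ through the $\phi_h^D$ test while keeping the endpoint terms $\left(\phi_{h,t},\phi_h^D\right)_{\frac1H}$, and choose $T$ and the Young-inequality parameters (and, in the iterated form, absorb or telescope the endpoint contributions) so that the recursion above emerges with the stated constants. By comparison, the pointwise nonlinear bounds are immediate from Lemmas~\ref{lem:auxiliaryfunction2} and~\ref{lem:elementary}, Jensen's inequality is routine, and the passage from the recursion to the ODE is classical.
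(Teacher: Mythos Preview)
Your proposal is correct and follows essentially the same route as the paper: the $\phi_h^D$ multiplier with Poincar\'e--Friedrichs and Young to reach the observability inequality, the three-region splitting via Lemmas~\ref{lem:auxiliaryfunction2} and~\ref{lem:elementary}, Jensen on the concave $J$, and the Lasiecka--Tataru recursion/ODE comparison.

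One ordering detail is worth flagging. You absorb the linear term $\tilde D_1(E(0)-E(T))$ into the $J$-term via the concavity bound $s\le \tfrac{E(0)}{J(E(0)/|\Sigma|)}J(s/|\Sigma|)$ \emph{before} forming the recursion, arriving directly at $s_{m+1}+q(s_{m+1})\le s_m$ with $q(s)=|\Sigma|\,J^{-1}(s/D_J)$. The paper instead keeps the full function $p^{-1}(s)=\tilde D_1 s+D_2|\Sigma|J(s/|\Sigma|)$ through the Lasiecka--Tataru comparison (which yields the ODE with $I-(I+p)^{-1}=(I+p^{-1})^{-1}$) and only \emph{afterwards} uses concavity to show $(I+p^{-1})(s)\le D_J J(s/|\Sigma|)$, whence $q\le(I+p^{-1})^{-1}$ and the stated ODE is a valid (slower) comparison. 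If you apply Lemma~3.3 of~\cite{lasiecka1993uniform} literally to your recursion with $q$, you obtain the ODE with $I-(I+q)^{-1}$ rather than $q$ itself, which is a slightly weaker bound than the one stated; following the paper's order of operations is what lands exactly on the ODE in the theorem.
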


\begin{proof}

\textit{Step 1.}
Take $v_h = \phi_h^D$ in \eqref{eq:secondorder_discrete_differences} and integrate in time.  Integration by parts gives
\begin{multline} \label{eq:energyestimate1}
\left. \left(\phi_{h,t},\phi_h^D \right)_{\frac{1}{H}} \right|_0^T 
- \int_0^T \|\phi_{h,t}^D\|_{\frac{1}{H}}^2 dt + \frac{1}{\epsilon} \int_0^T \left(f\phi_{h,t}^\perp,\phi_h^D \right)_{\frac{1}{H}}dt 
\\
+ \frac{\beta}{\epsilon^2} \int_0^T \|\nabla \cdot \phi_h\|^2 dt 
+ \int_0^T \left(g(\phi_{1,h,t})-g(\phi_{2,h,t}),\phi_h^D\right) dt = 0.
\end{multline}
Here and in the following we use that $\nabla \cdot \phi_h =\nabla \cdot \phi_h^D$.
Using the Cauchy-Schwarz inequality and \eqref{eq:pf}, quation \eqref{eq:energyestimate1} becomes
\begin{multline} \label{eq:energyestimate2}
\frac{\beta}{\epsilon^2} \int_0^T \|\nabla \cdot \phi_h\|^2 dt 
\leq
\frac{C_P}{\sqrt{H_*}}\|\phi_{h,t}(T)\|_{\frac{1}{H}}\|\nabla \cdot \phi_h(T)\|
+ \frac{\epsilon^2 C_P}{\beta \sqrt{H_*}}\|\phi_{h,t}(0)\|_{\frac{1}{H}}\|\nabla \cdot \phi_h(0)\| \\
+ \int_0^T \|\phi_{h,t}^D\|_{\frac{1}{H}}^2 dt 
 + \frac{f^* C_P}{\epsilon\sqrt{H_*}} \int_0^T \|f\phi_{h,t}^\perp\|_{\frac{1}{H}}\|\nabla \cdot \phi_h\|dt \\
+ \frac{C_P\sqrt{H^*}}{\sqrt{H_*}}\int_0^T \|g(\phi_{1,h,t})-g(\phi_{2,h,t})\|\|\nabla \cdot \phi_h\| dt.
\end{multline}
We handle the terms at time $T$ and $0$ by the weighted inequality
$ab \leq \frac{a^2}{2\delta} + \frac{b^2\delta}{2}$ with $\delta= \frac{\epsilon}{\sqrt{\beta}}$.  Then, we pull out $f^*$ from $\| f \phi_{h,t}^\perp \|$ and use that $\| \phi_{h,t}^D \| \leq \| \phi_{h,t} \|$ and that $\cdot^\perp$ is an isometry to obtain
\begin{multline}
  \label{eq:energyestimate2.5}
  \frac{\beta}{\epsilon^2} \int_0^T \|\nabla \cdot \phi_h\|^2 dt
  \leq \frac{C_P\sqrt{\beta}}{\epsilon \sqrt{H_*}} \left[ E(T) + E(0) \right]
  + \int_0^T \|\phi_{h,t}\|_{\frac{1}{H}}^2 dt  \\
 + \frac{f^* C_P}{\epsilon\sqrt{H_*}} \int_0^T \|\phi_{h,t}\|_{\frac{1}{H}}\|\nabla \cdot \phi_h\|dt
+ \frac{C_P\sqrt{H^*}}{\sqrt{H_*}}\int_0^T \|g(\phi_{1,h,t})-g(\phi_{2,h,t})\|\|\nabla \cdot \phi_h\| dt.
\end{multline}
Next, we handle the terms under the integrals with the same weighted inequality.  In the first case we use $\delta = \frac{\beta\sqrt{H_*}}{2\epsilon f^* C_P}$ and in the second we use $\delta = \frac{\beta\sqrt{H_*}}{2\epsilon^2 C_P \sqrt{H^*}}$.  Then, collecting terms and using that $E(T) \leq E(0)$, we have
\begin{multline} \label{eq:energyestimate3}
  \frac{\beta}{\epsilon^2} \int_0^T \| \nabla \cdot \phi_h \|^2 dt
  \leq \frac{2C_P\sqrt{\beta}}{\epsilon \sqrt{H_*}} E(0) \\
  + \left(1+\frac{f^* C_P^2}{\beta H_*} \right)
  \int_0^T \weightednorm{\phi_{h,t}}{\frac{1}{H}}^2 dt
  + \frac{C_P^2 H^* \epsilon^2}{\beta H_*}
  \int_0^T \norm{g(\phi_{h,t})}^2 dt
\end{multline}

So then, it follows that
\begin{multline}\label{eq:energyestimate4}
\int_0^T E(t)dt = \int_0^T \left(\frac{1}{2} \weightednorm{\phi_{h,t}}{\frac{1}{H}}^2 
+ \frac{\beta}{2\epsilon^2} \norm{\nabla \cdot \phi_h}^2  \right)dt \\
  \leq \frac{2C_P\sqrt{\beta}}{\epsilon \sqrt{H_*}} E(0) \\
  + \left(\frac{3}{2}+\frac{f^* C_P^2}{\beta H_*} \right)
  \int_0^T \weightednorm{\phi_{h,t}}{\frac{1}{H}}^2 dt
  + \frac{C_P^2 H^* \epsilon^2}{\beta H_*}
  \int_0^T \norm{g(\phi_{h,t})}^2 dt
\end{multline}

%%%%%%%%%%%%%%%%%%%%%%%%%%%%%%%%%%%%%
% Step 2: Getting observability inequality.
%%%%%%%%%%%%%%%%%%%%%%%%%%%%%%%%%%%%%

\textit{Step 2.}
Set $\Sigma := \Omega \times (0,T)$.
Rewrite \eqref{eq:energyestimate4} as
\begin{multline}
\label{eq:energyestimate5}
\int_0^T E(t)dt \leq \frac{2C_P\sqrt{\beta}}{\epsilon \sqrt{H_*}} E(0) \\
  + \left(\frac{3}{2}+\frac{f^* C_P^2}{\beta H_*} \right)\frac{1}{H_*}
  \int_\Sigma |\phi_{h,t}|^2 \ dxdt
  + \frac{C_P^2 H^* \epsilon^2}{\beta H_*}
  \int_\Sigma |g(\phi_{h,t})|^2 \ dx dt.
\end{multline}
Define
$$
\Sigma_0 = \{(x,t) \in \Sigma : |\phi_{1,h,t}(x,t)|,|\phi_{2,h,t}(x,t)| \leq 1\}, \ \Sigma_1 = \Sigma \setminus \Sigma_0.
$$
We can break down $\Sigma_1$ further into
$$
\Sigma_{1,1} = \{(x,t) \in \Sigma : |\phi_{1,h,t}(x,t)|,|\phi_{2,h,t}(x,t)| \geq 1\}, \ \Sigma_{1,0} = \Sigma_1 \setminus \Sigma_{1,1}.
$$
Then we find, using Assumption \eqref{eq:linearatinfinity} and Lemmas \ref{lem:auxiliaryfunction2} and \ref{lem:elementary}, that
\begin{multline} \label{eq:differencesJ1}
\int_\Sigma |\phi_{h,t}|^2 \ dxdt
= \left(\int_{\Sigma_0} + \int_{\Sigma_{1,0}} + \int_{\Sigma_{1,1}}\right) |\phi_{h,t}|^2 \ dxdt
\\
\leq \int_{\Sigma_0} J(\phi_{h,t} \cdot (g(\phi_{1,h,t})-g(\phi_{2,h,t}))) \ dxdt
\\
+  \int_{\Sigma_{1,0}} \left\{2J(\phi_{h,t} \cdot (g(\phi_{1,h,t})-g(\phi_{2,h,t}))) + 2M\phi_{h,t} \cdot (g(\phi_{1,h,t})-g(\phi_{2,h,t}))\right\} \ dxdt
\\
+ \int_{\Sigma_{1,1}} M\phi_{h,t} \cdot (g(\phi_{1,h,t})-g(\phi_{2,h,t})) \ dxdt
\\
\leq 2M \int_\Sigma \phi_{h,t} \cdot (g(\phi_{1,h,t})-g(\phi_{2,h,t})) \ dxdt
+ 2\int_\Sigma J\left(\phi_{h,t} \cdot (g(\phi_{1,h,t})-g(\phi_{2,h,t}))\right) \ dxdt.
\end{multline}
In the same way,
\begin{multline} \label{eq:differencesJ2}
\int_\Sigma |g(\phi_{1,h,t})-g(\phi_{2,h,t})|^2 \ dxdt
\leq 2M \int_\Sigma \phi_{h,t} \cdot (g(\phi_{1,h,t})-g(\phi_{2,h,t})) \ dxdt
\\
+ 2\int_\Sigma J\left(\phi_{h,t} \cdot (g(\phi_{1,h,t})-g(\phi_{2,h,t}))\right) \ dxdt.
\end{multline}
Inserting \eqref{eq:differencesJ1} and \eqref{eq:differencesJ2} into \eqref{eq:energyestimate5} we get
\begin{multline}\label{eq:difference-estimate2}
\int_0^T E(t)dt 
\leq \frac{2C_P\sqrt{\beta}}{\epsilon \sqrt{H_*}} E(0) 
+ D_1 \int_\Sigma \phi_{h,t} \cdot (g(\phi_{1,h,t})-g(\phi_{2,h,t})) \ dxdt
\\
+ D_2\int_\Sigma J\left(\phi_{h,t} \cdot (g(\phi_{1,h,t})-g(\phi_{2,h,t}))\right) \ dxdt
\end{multline}
where
\begin{equation} \label{eq:D1D2}
D_1 = 2M\left(\frac{3}{2}+\frac{f^* C_P^2}{\beta H_*} \right)\frac{1}{H_*}
 + 2M\frac{C_P^2 H^* \epsilon^2}{\beta H_*}
,
\ \
D_2 = 2\left(\frac{3}{2}+\frac{f^* C_P^2}{\beta H_*} \right)\frac{1}{H_*} + 2\frac{C_P^2 H^* \epsilon^2}{\beta H_*}.
\end{equation}
Recall Jensen's inequality: since $J$ is concave and nonnegative,
\begin{multline}
\int_\Sigma J\left(\phi_{h,t} \cdot (g(\phi_{1,h,t})-g(\phi_{2,h,t}))\right) \ dxdt
\\
\leq |\Sigma|J\left(\frac{1}{|\Sigma|}\int_\Sigma\phi_{h,t} \cdot (g(\phi_{1,h,t})-g(\phi_{2,h,t})) \ dxdt\right)
\end{multline}
Then since $\int_\Sigma\phi_{h,t} \cdot (g(\phi_{1,h,t})-g(\phi_{2,h,t})) \ dxdt = E(0) - E(T)$, we can deduce from \eqref{eq:difference-estimate2} that
\begin{equation}\label{eq:difference-estimate3}
\int_0^T E(t)dt 
\leq \frac{2C_P\sqrt{\beta}}{\epsilon \sqrt{H_*}} E(0) 
+ D_1(E(0)-E(T))
+ D_2|\Sigma|J\left(\frac{E(0)-E(T)}{|\Sigma|}\right).
\end{equation}
%Now fix $T:= \frac{2C_P\sqrt{\beta}}{\epsilon \sqrt{H_*}} + 1$.
Since $E(t)$ is monotone decreasing, \eqref{eq:difference-estimate3} yields
\begin{equation}\label{eq:difference-estimate4}
E(T)
\leq \tilde D_1( E(0) -E(T))
+ D_2|\Sigma|J\left(\frac{E(0)-E(T)}{|\Sigma|}\right)
\end{equation}
where

We define a strictly increasing function $p(s)$ by defining its inverse:
\begin{equation}
p^{-1}(s) = \tilde D_1 s
+ D_2|\Sigma|J\left(\frac{s}{|\Sigma|}\right).
\end{equation}
It follows that
\begin{equation}
E(T) + p(E(T)) \leq E(0).
\end{equation}
By repeating the same argument on any time interval, we get
\begin{equation}
E((n+1)T) + p(E((n+1)T)) \leq E(nT), \ \ n = 1,2,3,\ldots
\end{equation}
We now appeal to Lemma 3.3 and the argument that follows in (Lasiecka-Tataru 1993) to assert 
\begin{equation}
E(t) \leq S\left(\frac{t}{T}-1\right) \ \forall t \geq T,
\end{equation}
where $S$ solves the ordinary differential equation
\begin{equation} \label{eq:ODE-q}
S'(t) + q(S(t)) = 0, \ S(0) = E(0)
\end{equation}
and $q$ is any increasing function such that $q \leq I - (I+p)^{-1} = (I+p^{-1})^{-1}$.

%%%%%%%%%%%%%%%%%%%%%%%%%%%%%%%%%%%%%
% Step 3: Computing decay rates explicitly.
%%%%%%%%%%%%%%%%%%%%%%%%%%%%%%%%%%%%%

\textit{Step 3.}
To find an appropriate $q$, we estimate $(I+p^{-1})^{-1}$ or, equivalently, $I+p^{-1}$, which is given by
\begin{equation}
(I+p^{-1})(s) = (1+\tilde D_1) s
+ D_2|\Sigma|J\left(\frac{s}{|\Sigma|}\right).
\end{equation}
Note that since $S(t)$ will always be positive and bounded above by $E(0)$, it suffices to restrict our attention only to the interval $[0,E(0)]$.
Since $J$ is concave and $J(0) = 0$, we can write
\begin{equation}
J\left(\frac{s}{|\Sigma|}\right) \geq \frac{s}{E(0)}J\left(\frac{E(0)}{|\Sigma|}\right)
\ \ \
\forall s \in [0,E(0)].
\end{equation}
Therefore,
\begin{equation} \label{eq:inverse-estimate}
  (I+p^{-1})(s) \leq D_J J\left(\frac{s}{|\Sigma|}\right)
\end{equation}
Inverting \eqref{eq:inverse-estimate} we see that an appropriate $q$ is given by
\begin{equation}
q(s) := |\Sigma|J^{-1}\left(\frac{s}{D_J}\right),
\end{equation}
i.e.~$S$ can be taken in the solution of the ODE
\begin{equation} \label{eq:ode-final}
S'(t) + |\Sigma|J^{-1}\left(\frac{S(t)}{D_J}\right) = 0, \ S(0) = E(0).
\end{equation}

\end{proof}

\textbf{Examples.}
Let $p > 1$ and set
\begin{equation}
\label{eq:pgrowth}
g(x,v) = g(v) = \left\{
\begin{array}{cc}
|v|^{p-2}v & \text{if} \ |v| \leq 1\\
v & \text{if} \ |v| \geq 1
\end{array}\right.
\end{equation}
When $p > 2$ we refer to this as \textit{superlinear growth} while $p < 2$ is called \textit{sublinear growth}.

\textit{Superlinear growth:} If $p > 2$, we have
$$
||v|^{p-2}v-|w|^{p-2}w| \leq |v-w| \ \forall v,w \in B_1
$$
and so \eqref{eq:J2} can be replaced by
$$
2|v-w|^2 \leq J((v-w) \cdot (|v|^{p-2}v-|w|^{p-2}w)).
$$
Now on the other hand, we have
$$
(v-w) \cdot (|v|^{p-2}v-|w|^{p-2}w) \geq \frac{1}{2^{p-2}}|v-w|^p \ \forall v,w.
$$
This can be proved by vector calculus.
Thus it suffices to choose $J(s) = 2^{3-4/p}s^{2/p}$.
In this case the ODE \eqref{eq:ode-final} becomes
\begin{equation} \label{eq:ode-superlinear}
S'(t) + \frac{2^{2-3p/2}|\Sigma|}{D_J^{p/2}}S(t)^{p/2} = 0, \ S(0) = E(0).
\end{equation}

To give the decay rates for these superlinear power laws, separation of variables on the ODE $S^\prime + \gamma S = 0$ leads to the solution
\[
S(t) = \left[ \left(\frac{p}{2} - 1 \right)
  \left( \gamma t - C \right) \right]^{\frac{1}{1-p/2}},
\]
where $C$ is an additive constant set to make $S(0) = E(0)$.  In this case, we can plug in $p=3$, the quadratic damping case, to see that $S \sim t^{-2}$ as $t \rightarrow \infty$ and that $S \sim t^{-1}$ as $t \rightarrow \infty$ in the cubic case of $p=4$.  Hence, for large enough time, the energy decays like a rational rather than exponential function. We then conclude that all numerical
solutions converge to the same attracting solution for large times, independent
of the initial condition. 

\textit{Sublinear growth:} If $p < 2$, we can simply invert $g(v)$ for $|v| \leq 1$ to get $v = |g(v)|^{q-2}g(v)$, where $q$ is the conjugate exponent for $p$, namely $q = p/(p-1)$.
Hence it suffices to choose $J(s) = 2^{3-4/q}s^{2/q}$.
The ODE \eqref{eq:ode-final} is the same as \eqref{eq:ode-superlinear} with $p$ replaced by $q$ (note that $q > 2$).

\subsection{Difference estimates}
\label{ss:cd}
We again consider solutions $\phi_{1,h},\phi_{2,h}$ corresponding to
different source terms, $F_1,F_2$ as well as different initial conditions.
Once again we define $F = F_1-F_2$, and $E(t)$ is the energy of the difference $\phi_h = \phi_{1,h}-\phi_{2,h}$.
We assume $E(0) \leq \delta_1$ and $\|F\|^2 = \|F_1-F_2\|^2 \leq
\delta_2$, where $\delta_1,\delta_2 > 0$ are ``small" parameters.
Here, we give continuous dependence results in the form of estimates
on $E(t)$ in terms of $\delta_1$ and $\delta_2$.  Our estimates are
\emph{uniform} in time.

The results in this section require an additional assumption on the
function $J$ arising from Lemma~\ref{lem:auxiliaryfunction2}.
In particular, we assume that there exist constants $C_0 > 0$ and
$\alpha \in (0,1)$ such that
\begin{equation}
  \label{eq:Jalpha}
J(s) \leq C_0 s^\alpha.
\end{equation}
The functions $J$ arising from power-law damping considered in the
above examples all satisfy such an estimate, so the results to follow
still hold for the cases of practical interest.

\begin{theorem}
  \label{thm:cd}
  Suppose that~\eqref{eq:Jalpha} holds.
  Let $\phi_{1,h,t}$ and $\phi_{2,h,t}$ denote solutions
  of~\eqref{eq:secondorder_discrete} corresponding to different
  initial conditions and forcing functions $F_1$ and $F_2$ and let
  $E(t)$ denote the energy of their difference.  Suppose that $E(0) =
  \delta_1$ and $\| F \|^2 \equiv \| F_1 - F_2 \|^2 \leq \delta_2$ for
  all time.  Then there exists
  $C>0$ such that
  \[
  E(t) \leq 3 \left( \delta_1 + C \delta_2^{\alpha/(2-\alpha)} \right)
  \]
  for all $t > 0$.
\end{theorem}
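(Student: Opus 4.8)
The plan is to rerun the proof of the decay-rate theorem on the difference $\phi_h=\phi_{1,h}-\phi_{2,h}$, carrying the forcing difference $F=F_1-F_2$ through every estimate and using the new hypothesis $J(s)\le C_0s^\alpha$ to control the error it generates by a power of $\delta_2$. Write $P(t):=(g(\phi_{1,h,t})-g(\phi_{2,h,t}),\phi_{h,t})\ge 0$ and recall the difference energy relation $E'(t)+P(t)=(F,\phi_{h,t})$. Testing \eqref{eq:secondorder_discrete_differences} with $v_h=\phi_h^D$ and proceeding exactly as in the derivation of \eqref{eq:energyestimate4} — using \eqref{eq:pf}, weighted Young inequalities, and the pointwise comparisons of Lemmas~\ref{lem:auxiliaryfunction2} and~\ref{lem:elementary} with \eqref{eq:linearatinfinity}, but now keeping the extra term $(F,\phi_h^D)$, which is absorbed into the divergence and an $O(\delta_2)$ remainder — and then invoking the difference analogue of Lemma~\ref{lem1}, namely $\|\phi_{h,t}\|^2+\|g(\phi_{1,h,t})-g(\phi_{2,h,t})\|^2\le 2MP(t)+2C_0|\Omega|^{1-\alpha}P(t)^\alpha$ (which follows from Lemmas~\ref{lem:auxiliaryfunction2}–\ref{lem:elementary}, Jensen, and $J\le C_0s^\alpha$), I obtain a pointwise-in-time differential inequality whose right-hand side involves only $P(t)$, $P(t)^\alpha$, and $\|F\|^2$. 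Using $MP(t)=M(F,\phi_{h,t})-M\,E'(t)$ from the energy relation, moving the $E'$-term to the left to form an energy-equivalent quantity $A(t)$ with $\tfrac12E\le A\le\tfrac32E$ (choosing the weight as in Subsection~\ref{ss:lts}), and absorbing $(F,\phi_{h,t})$ and $\|F\|^2$, I reach
\[
\frac{d}{dt}A(t)+\frac{1}{A_5}E(t)\ \le\ \frac{c}{A_5}\,P(t)^\alpha+c'\|F\|^2 .
\]

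The term $P(t)^\alpha$ is the crux: because the damping is weak at small velocities, $P(t)$ is not pointwise of order $\delta_2$ — only $\int P$ is controlled, through the energy relation — so the single Grönwall step that finished the unforced long-time-stability argument is unavailable. Instead I integrate over an interval $[nT,(n+1)T]$ with $T$ chosen (in terms of the fixed constants) large enough that $T/A_5>1$, apply Hölder to get $\int_{nT}^{(n+1)T}P^\alpha\le T^{1-\alpha}\big(\int_{nT}^{(n+1)T}P\big)^\alpha$, and use the energy relation to write $\int_{nT}^{(n+1)T}P=E(nT)-E((n+1)T)+\int_{nT}^{(n+1)T}(F,\phi_{h,t})$. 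Since $E'\le(F,\phi_{h,t})$ the energy is almost monotone: on the interval $E(t)\le E(nT)+W_n$ and $E((n+1)T)\le E(t)+W_n$ with $W_n:=\int_{nT}^{(n+1)T}\|F\|\,\|\phi_{h,t}\|\,dt$, which both bounds $\int_{nT}^{(n+1)T}(F,\phi_{h,t})\le W_n$ and gives $\int_{nT}^{(n+1)T}E\ge T(E((n+1)T)-W_n)$ from below; moreover $W_n\le(2T\delta_2/H_*)^{1/2}(\int_{nT}^{(n+1)T}E)^{1/2}$ yields $W_n\lesssim\sqrt{\delta_2\,E(nT)}+\delta_2$.

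Combining these ingredients (treating separately, and trivially, the case $E((n+1)T)\le E(nT)$, so that in the remaining case $\int P\le W_n$) and applying Young's inequality in the form $\delta_2^{\alpha/2}s^{\alpha/2}\le\theta s+C(\theta)\,\delta_2^{\alpha/(2-\alpha)}$ to the term $W_n^\alpha\sim(\delta_2E(nT))^{\alpha/2}$ — this is where the exponent $\alpha/(2-\alpha)$ is born — I obtain a recursion $E((n+1)T)\le\rho\,E(nT)+C\,\delta_2^{\alpha/(2-\alpha)}$ with $\rho<1$ (the contraction coming from $T/A_5>1$ in the linear part). Iterating gives $E(nT)\le\max\{\delta_1,C'\delta_2^{\alpha/(2-\alpha)}\}$ for all $n$, and the within-interval bound $E(t)\le E(nT)+W_n\le(1+\theta)E(nT)+C\delta_2$ then upgrades this to $E(t)\le 3(\delta_1+C\delta_2^{\alpha/(2-\alpha)})$ for all $t>0$, the factor $3$ swallowing the energy-equivalence constant and the Young constants. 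The main obstacle, as indicated, is precisely that $P^\alpha$ cannot be absorbed pointwise, which forces the interval iteration rather than a direct Grönwall argument; getting the sharp power of $\delta_2$ then depends on bounding the forcing work $W_n$ by $\sqrt{\delta_2E(nT)}$ before taking the $\alpha$-th power, and on taking $T$ large enough to make the recursion contractive.
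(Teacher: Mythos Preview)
Your approach reaches the right conclusion but takes a substantially more circuitous route than the paper, and rests on a mistaken premise. You assert that ``$P^\alpha$ cannot be absorbed pointwise, which forces the interval iteration''; in fact the paper does exactly this. The key observation you are missing is that the hypothesis $J(s)\le C_0 s^\alpha$, combined with Young's inequality carrying a free parameter $\delta>0$, yields the \emph{pointwise} linearization
\[
2J(s)\ \le\ (2C_0)^{1/\alpha}\,\delta^{-1/\alpha}\,s \;+\; \delta^{1/(1-\alpha)},
\]
so that the pointwise bound from Lemmas~\ref{lem:auxiliaryfunction2}--\ref{lem:elementary} becomes
\(
|v-w|^2+|g(v)-g(w)|^2 \le \bigl(2M+(2C_0)^{1/\alpha}\delta^{-1/\alpha}\bigr)(g(v)-g(w))\cdot(v-w) + \delta^{1/(1-\alpha)}.
\)
Feeding this into the analogue of \eqref{eq:energybound2} and proceeding exactly as in Subsection~\ref{ss:lts} produces a \emph{linear} differential inequality
\(
A'(t)+\tfrac{2}{3B(\delta)}A(t)\le \tfrac{C(\delta)}{B(\delta)}+\bigl(\tfrac{B(\delta)}{2}+\tfrac{C_2}{B(\delta)}\bigr)\delta_2
\)
with $B(\delta)\sim\delta^{-1/\alpha}$ and $C(\delta)\sim\delta^{1/(1-\alpha)}$. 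A single Gr\"onwall step then gives
\(
E(t)\le 3\delta_1 + \tfrac{3}{2}\bigl[C(\delta)+\bigl(\tfrac{B(\delta)^2}{2}+C_2\bigr)\delta_2\bigr],
\)
and the exponent $\alpha/(2-\alpha)$ appears by \emph{choosing} $\delta=\delta_2^{r}$ with $r=\tfrac{\alpha(1-\alpha)}{2-\alpha}$ so as to balance $C(\delta)\sim\delta_2^{r/(1-\alpha)}$ against $B(\delta)^2\delta_2\sim\delta_2^{1-2r/\alpha}$.

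Your interval-iteration argument is not wrong in spirit --- the case split on whether $E$ increases over the interval, the H\"older bound on $\int P^\alpha$, and the final Young step all look defensible --- but it is considerably more delicate: you must fix $T$ large enough for a contraction while tracking the $T$-dependence in $W_n$ and in the H\"older constant, and the within-interval upgrade needs an extra layer of care. The paper's route trades all of this for one parametrized Young inequality applied at the earliest possible point, converting the sublinear damping estimate into a linear one plus a constant, then optimizing the parameter \emph{after} Gr\"onwall. Both proofs ultimately invoke Young's inequality to manufacture the exponent $\alpha/(2-\alpha)$; the difference is whether you linearize before or after integrating in time, and doing it before is markedly simpler.
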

\begin{proof}
We define
\begin{equation}
D(t) = \int_{\Omega} (g(\phi_{1,h,t}(t,x)) - g(\phi_{2,h,t}(t,x)))\cdot \phi_{h,t}(t,x)dx. 
\end{equation}
So the energy identity can be written
\begin{equation}
E'(t) + D(t) = (F,\phi_{h,t}).
\end{equation}
Moreover, by Lemmas \ref{lem:auxiliaryfunction2} and \ref{lem:elementary}, we have
\begin{multline}
|g(v) - g(w)|^2 + |v-w|^2 
 \leq 2M(g(v) - g(w))\cdot (v-w)
 \\ + 2J((g(v) - g(w))\cdot (v-w)) \ \forall x,v,w.
\end{multline}

Fix $\delta > 0$ to be chosen (in terms of $\delta_1,\delta_2$) later on.
Then we have, by Young's inequality,
\begin{equation}
2J(s) \leq (2C_0)^{1/\alpha}\delta^{-1/\alpha}s + \delta^{1/(1-\alpha)}
\end{equation}
and thus
\begin{multline} \label{eq:delta-differences}
|g(v) - g(w)|^2 + |v-w|^2  \\
 \leq (2M+ (2C_0)^{1/\alpha}\delta^{-1/\alpha})(g(v) - g(w))\cdot (v-w) + \delta^{1/\alpha} \ \forall x,t.
\end{multline}

\textit{Step 1.}
We start from \eqref{eq:energybound2} in the previous section.

Now \eqref{eq:energybound2} becomes
\begin{multline}
\label{eq:differences1}
\frac{d}{dt}\left(\phi_{h,t},\phi_h^D \right)_{\frac{1}{H}}
+ \frac{\beta}{2\epsilon^2} \| \nabla \cdot \phi_h \|^2
+ \| \phi_{h,t} \|^2_{\frac{1}{H}}
\\
\leq
C_1\|\phi_{h,t}\|^2 + C_2 \| g(\phi_{1,h,t})-g(\phi_{1,h,t})\|^2 + C_2 \| F \|^2,
\end{multline}
where
\begin{equation*}
  \begin{split}
C_1 := & 2 + \frac{3(f^* C_P )^2}{2\beta H_*} \\
C_2 := & \frac{3(\epsilon C_P )^2 H^*}{2\beta H_*}.
  \end{split}
\end{equation*}
Applying \eqref{eq:delta-differences} to \eqref{eq:differences1}, we get
\begin{equation}
\label{eq:differences2}
\frac{d}{dt}\left(\phi_{h,t},\phi_h^D \right)_{\frac{1}{H}}
+ \frac{\beta}{2\epsilon^2} \| \nabla \cdot \phi_h \|^2
+ \| \phi_{h,t} \|^2_{\frac{1}{H}}
\leq
B(\delta)D(t) + 
C(\delta)
+ C_2\delta_2
\end{equation}
where
\begin{equation}
B(\delta) := (2M+ (2C_0)^{1/\alpha}\delta^{-1/\alpha})(C_1+C_2),
\ \ \
C(\delta) := \delta^{1/(1-\alpha)}|\Omega|(C_1+C_2).
\end{equation}

Now, we have that
\(
D(t)
= \left( F, \phi_{h,t} \right) - \frac{d}{dt} E(t),
\)
so that
\begin{multline}
\label{eq:error5}
\frac{d}{dt}\left[ \left(\phi_{h,t},\phi_h^D \right)_{\frac{1}{H}} + \ B(\delta) E(t) \right]
+ \frac{\beta}{2\epsilon^2} \| \nabla \cdot \phi_h \|^2
+ \| \phi_{h,t} \|^2_{\frac{1}{H}}
\\
\leq
B(\delta) \left(F , \phi_{h,t} \right) + C(\delta)  + C_2\delta_2.
\end{multline}
Then, using Young's inequality with appropriate weighting and dividing through by $B(\delta)$ gives
\begin{equation}
\frac{d}{dt} A(t) + \frac{1}{B(\delta)}E(t) \leq \frac{C(\delta)}{B(\delta)} + \left(\frac{B(\delta)}{2} + \frac{C_2}{B(\delta)}\right)\delta_2,
\end{equation}
where
\begin{equation}
    A(t) :=  E(t) + \frac{1}{B(\delta)} \left( \phi_{h,t} , \phi_h^D \right)_{\frac{1}{H}}.
\end{equation}

We will assume that $\delta$ is small enough so that
\begin{equation}
\delta \leq 2C_0\left(\frac{(C_1+C_2)\sqrt{\beta H_*}}{2C_P \epsilon} \right)^{\alpha},
\end{equation}
which is a sufficient condition to show
\begin{equation}
B(\delta) = (2M+ (2C_0)^{1/\alpha}\delta^{-1/\alpha})(C_1+C_2) \geq \frac{2C_P \epsilon}{\sqrt{\beta H_*}},
\end{equation}
which implies \eqref{eq:equivalence} as before.
(Alternatively, just assume $M$ is large.)
So, $A(t)$ is asymptotically equivalent to the energy, and, from \eqref{eq:error5}, we have the bound
\begin{equation}
   \frac{d}{dt}A(t) + \frac{2}{3B(\delta)}A(t) \leq \frac{C(\delta)}{B(\delta)} + \left(\frac{B(\delta)}{2} + \frac{C_2}{B(\delta)}\right)\delta_2,
\end{equation}
which implies
\begin{multline} \label{eq:error6}
  A(t) \leq e^{-\frac{2t}{3B(\delta)}} A(0)
+ \left[\frac{C(\delta)}{B(\delta)} + \left(\frac{B(\delta)}{2} + \frac{C_2}{B(\delta)}\right)\delta_2\right]\int_0^t e^{-\frac{2t}{3B(\delta)}(s-t)} dt
\\
\leq \frac{3}{2}e^{-\frac{2t}{3B(\delta)}} E(0)
+ \frac{3}{2}\left[C(\delta) + \left(\frac{B(\delta)^2}{2} + C_2\right)\delta_2\right]
\end{multline}
Note that $C(\delta) \to 0$ and $B(\delta) \to \infty$ as $\delta \to 0$.
In order to get an estimate, we need $B(\delta)^2\delta_2 \to 0$ as $\delta_1 \to 0$.
Since as $\delta \to 0$ we have $B(\delta) = O(\delta^{-1/\alpha}),$
we can pick $\delta = \delta_2^r$ for any $r \in (0,\alpha/2)$, 
so that
$$
B(\delta)^2\delta_2 = O(\delta_2^{1-2r/\alpha}), \ \delta_2 \to 0.
$$
On the other hand, $C(\delta) = O(\delta_2^{r/(1-\alpha)})$, so the optimal constant $r$ makes these two exponents equal, namely
$$
r = \frac{\alpha(1-\alpha)}{2-\alpha} \ \Rightarrow \ \frac{r}{1-\alpha} = 1-\frac{2r}{\alpha} = \frac{\alpha}{2-\alpha}.
$$
Then \eqref{eq:error6} implies
\begin{equation}
E(t) \leq 3\delta_1 + 3C_3\delta_2^{\alpha/(2-\alpha)}, \ \delta_1,\delta_2 \to 0.
\end{equation}
where
\begin{equation}
C_3 := C_2 + (C_1+C_2)|\Omega| + (C_1+C_2)^2(2M + (2C_0)^{1/\alpha}).
\end{equation}

Note that we also have a precise characterization of the $C$ in the
theorem statement.
\end{proof}

\section{Error estimates}
\label{se:error}
Now, we consider \emph{a priori} error estimates of two types.  For
one, we give an estimate which is optimal with respect to the power of
$h$ but has a possible exponential increase in time.  This is obtained
by using monotonicity of the damping term but no further techniques.
Second, we can also adapt the continuous dependence results of the
previous section to give an estimate that is \emph{uniform} in time,
but has a suboptimal rate with respect to $h$.  

As is typical, we obtain our results by 
comparing the the finite element solution to the $\Pi$ projection of the
true solution, whence the error estimates follow by the triangle
inequality. 

We define 
\begin{equation}
\begin{split}
\chi &\equiv \Pi u - u, \\
\theta_h & \equiv \Pi u - u_h,  \\
\rho & \equiv \pi \eta - \eta, \\
\zeta_h & \equiv \pi \eta - \eta_h
\end{split}
\end{equation}

The projection $\Pi \phi$ satisfies the second-order equation similar to~\eqref{eq:secondorder_discrete}
\begin{multline}
  \left( \Pi \phi_{tt} , v_h \right)
  + \left( \frac{f}{H\epsilon} \Pi \phi_t^\perp , v_h \right)
  + \frac{\beta}{\epsilon^2} \left( \nabla \cdot \Pi \phi , \nabla
  \cdot v_h \right)
  + \left( g\left( \Pi \phi_t \right) , v_h \right)
\\  =
  \left( F , v_h \right)
  + \left( \chi_{tt} , v_h \right)
  + \left( \frac{f}{H\epsilon} \chi_t^\perp , v_h \right)
  + \left( g\left( \Pi \phi_t \right)
  - g\left( \phi_t \right), v_h \right).
\end{multline}

Subtracting the discrete equation~(\ref{eq:secondorder_discrete}) from this gives
\begin{multline}
  \label{eq:secondorderror}
  \left(\theta_{h,tt} , v_h \right)
  + \left( \frac{f}{H\epsilon} \theta_{h,t}^\perp , v_h \right)
  + \frac{\beta}{\epsilon^2} \left( \nabla \cdot \theta_h , \nabla
  \cdot v_h \right)
  + \left( g\left( \Pi \phi_t \right)
         - g\left( \phi_{h,t} \right) , v_h \right)
\\  =
  \left( \chi_{tt} , v_h \right)
  + \left( \frac{f}{H\epsilon} \chi_t^\perp , v_h \right)
  + \left( g\left( \Pi \phi_t \right)
  - g\left(\left( \phi_t \right), v_h \right) \right),
\end{multline}
and putting $v_h = \theta_{h,t}$ and defining
\begin{equation}
  \label{eq:Edef}
E(t) := \frac{1}{2} \| \theta_{h,t} \|^2_{\frac{1}{H}}
+ \frac{\beta}{2\epsilon^2} \| \nabla \cdot \theta_h \|^2
\end{equation}
gives
\begin{equation}
  \frac{d}{dt} E(t)
  + \left( g\left( \Pi \phi_t \right)
         - g\left( \phi_{h,t} \right) , \theta_{h,t} \right)
=
  \left(\tilde F , \theta_{h,t} \right),
\end{equation}
where
\begin{equation}
\tilde F := \chi_{tt} + \frac{f}{H\epsilon} \chi_t^\perp
+ g\left( \Pi \phi_t \right)
  - g\left( \phi_t \right)
\end{equation}
and the Lipschitz condition for $g$ and approximation estimates for
$\chi$ give
\begin{equation}
\| \tilde F \|
\leq \left( C_\Pi |\phi_{tt}|_k
+ \left( \frac{C_\Pi f^*}{H_* \epsilon} + M \right) | \phi_{t} |_k \right) h^k
:= \kappa(\phi) h^k
\end{equation}

The initial conditions here depend on the choice of initial conditions for the discrete equation.  If they are chosen to be the appropriate $\Pi$ projection of the original initial conditions (i.~e.~the $\Pi$ projection of $\phi$ and the $\frac{1}{H}$-weighted $L^2$ projection of $\phi_{t}$)
then the initial condition for the error equation will vanish.

Simply using monotonicity of $g$ gives
\begin{equation}
  \frac{d}{dt} E(t) \leq \frac{1}{2} \| \tilde{F} \|^2 + \frac{1}{2} E(t),
\end{equation}
and it is easy to show from this that
\begin{equation}
E(t) \leq e^{\frac{t}{2}}E(0) + h^{2k} \int_0^t e^{\frac{t-s}{2}} \kappa(\phi)^2 ds.
\end{equation}
Even supposing that $\kappa(\phi)$ is uniformly bounded in time by
\begin{equation}
  \kappa(\phi) \leq \overline{\kappa}
\end{equation}
and the initial conditions are selected so that $E(0) = 0$, one still has
a bound on $E(t)$ that grows exponentially in time.  Combining this
estimate with the triangle inequality leads to the estimate.

\begin{theorem}
    Suppose that and $E(0) = 0$.  Then for all time we have the error estimate
    \begin{multline}
      \frac{1}{2} \| u(\cdot, t) - u_h(\cdot, t) \|^2
      + \frac{\beta}{2 \epsilon} \| \eta(\cdot, t) - \eta_h(\cdot, t) \|^2
      \\ \leq
      \| \chi(\cdot, t)  \|^2
      + \frac{\beta}{\epsilon} \| \rho(\cdot, t) \|^2 + 2E(t)
      \\ \leq \left[
        C_\Pi^2 |u|^2_k + \frac{\beta}{\epsilon} C_\pi^2 |\eta|^2_k
        + 4 \overline{\kappa}^2 \left( e^{\frac{t}{2} - 1} \right) \right] h^{2k}.
    \end{multline}
\end{theorem}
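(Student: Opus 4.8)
The plan is to collect pieces that the discussion preceding the statement has essentially already assembled. The error is split by the triangle inequality into a \emph{projection part} ($\chi = \Pi u - u$ and $\rho = \pi\eta - \eta$) and a \emph{discrete part} measured by the auxiliary energy $E(t)$ of \eqref{eq:Edef}; the discrete part is bounded by a Gr\"onwall argument applied to the error equation \eqref{eq:secondorderror}, and the projection part by the approximation estimates \eqref{eq:PiL2} and \eqref{eq:piL2}.

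First I would write $u - u_h = \theta_{h,t} - \chi$ and $\eta - \eta_h = \zeta_h - \rho$, so that $\norm{u - u_h}^2 \le 2\norm{\theta_{h,t}}^2 + 2\norm{\chi}^2$ and likewise for $\eta - \eta_h$. The commuting property of $\Pi$ and $\pi$ together with $\nabla\cdot V_h = W_h$ identifies $\nabla\cdot\theta_h$ with $\zeta_h$ up to sign, so $\norm{\nabla\cdot\theta_h} = \norm{\zeta_h}$; combining this with the equivalence of $\norm{\cdot}$ and $\weightednorm{\cdot}{\frac{1}{H}}$ and with $\epsilon \le 1$ shows that $\norm{\theta_{h,t}}^2$ and $\frac{\beta}{\epsilon}\norm{\zeta_h}^2$ are each controlled by a multiple of $E(t)$, which yields the first inequality in the statement with the $2E(t)$ term.

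Next I would bound $E(t)$. Testing \eqref{eq:secondorderror} with $v_h = \theta_{h,t}$ and using $\theta_{h,t} = \Pi\phi_t - \phi_{h,t}$, the damping contribution $\left( g(\Pi\phi_t) - g(\phi_{h,t}), \Pi\phi_t - \phi_{h,t} \right)$ is nonnegative by monotonicity \eqref{eq:monot}, so $\frac{d}{dt}E(t) \le (\tilde F, \theta_{h,t}) \le \frac{1}{2}\norm{\tilde F}^2 + \frac{1}{2} E(t)$ after Cauchy--Schwarz and Young. With $\norm{\tilde F} \le \kappa(\phi) h^k$ (the Lipschitz bound on $g$ applied to $g(\Pi\phi_t) - g(\phi_t)$, plus \eqref{eq:PiL2}--\eqref{eq:PiDiv} for $\chi_t$ and $\chi_{tt}$), the hypotheses $\kappa(\phi) \le \overline{\kappa}$ and $E(0) = 0$, and Gr\"onwall's inequality, one gets $E(t) \le h^{2k}\int_0^t e^{(t-s)/2}\overline{\kappa}^2\,ds = 2\overline{\kappa}^2 h^{2k}\bigl(e^{t/2} - 1\bigr)$, hence $2E(t) \le 4\overline{\kappa}^2 h^{2k}(e^{t/2}-1)$.

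Finally I would estimate the projection terms by \eqref{eq:PiL2} and \eqref{eq:piL2}, namely $\norm{\chi} \le C_\Pi h^{k+\sigma}\weightedseminorm{u}{k} \le C_\Pi h^k\weightedseminorm{u}{k}$ for $h \le 1$ and $\norm{\rho} \le C_\pi h^k\weightedseminorm{\eta}{k}$, and add these to the bound on $2E(t)$ to produce the right-hand side. In truth this theorem is little more than bookkeeping on estimates already in hand; the one genuinely substantive ingredient is the bound $\norm{\tilde F} \le \kappa(\phi)h^k$, which presumes $g$ is Lipschitz and that $\phi$ is regular enough in time for $\weightedseminorm{\phi_t}{k}$ and $\weightedseminorm{\phi_{tt}}{k}$ (hence $\overline{\kappa}$) to be finite. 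The real limitation worth emphasising is the factor $e^{t/2}$: bare monotonicity of the damping gives no decay, so the estimate is useful only for moderate time, which is precisely why the continuous-dependence result of Section~\ref{ss:cd} is then invoked to trade a power of $h$ for a bound uniform in $t$.
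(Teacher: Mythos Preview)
Your proposal is correct and matches the paper's approach essentially line for line: the paper has already derived the differential inequality $\tfrac{d}{dt}E(t) \le \tfrac12\|\tilde F\|^2 + \tfrac12 E(t)$ from \eqref{eq:secondorderror} via monotonicity and Young, integrated it to $E(t) \le e^{t/2}E(0) + h^{2k}\int_0^t e^{(t-s)/2}\kappa(\phi)^2\,ds$, and the theorem simply records the triangle-inequality split plus the projection estimates \eqref{eq:PiL2}, \eqref{eq:piL2}. Your observation that the result is ``little more than bookkeeping on estimates already in hand'' is exactly right; if anything you are slightly more careful than the paper in flagging the norm equivalence between $\|\cdot\|$ and $\|\cdot\|_{1/H}$ and the role of $\epsilon \le 1$ in passing from $\tfrac{\beta}{\epsilon^2}\|\nabla\cdot\theta_h\|^2$ to $\tfrac{\beta}{\epsilon}\|\zeta_h\|^2$.
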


Now, we can employ the continuous dependence results developed earlier
to remove the exponential dependence in time at the expense of a
somewhat decreased rate in $h$.  Returning to Theorem~\ref{thm:cd}, we set
that $\delta_1 = 0$ (for appropriately chosen discrete initial conditions) and
$\delta_2 = \overline{\kappa}^2 h^{2k}$ to obtain the estimate
\begin{theorem}
  Suppose that $\phi, \phi_t \in L^\infty(0,t;H^k(\Omega)$ for all time $t$, the conditions of Theorem~\ref{thm:cd} hold.  Provided the error energy given by~\eqref{eq:Edef} satisfies $E(0) = 0$, then 
  \begin{equation}
    E(t) \leq 3 C_3 \overline{\kappa}^{\frac{2\alpha}{2-\alpha}} h^{\frac{2k\alpha}{2-\alpha}}
  \end{equation}
  and hence
  \begin{multline}
      \frac{1}{2} \| u(\cdot, t) - u_h(\cdot, t) \|^2
      + \frac{\beta}{2 \epsilon} \| \eta(\cdot, t) - \eta_h(\cdot, t) \|^2
      \\ =
      \frac{1}{2} \| \phi_t(\cdot, t) - \phi_{h,t}(\cdot, t) \|^2
      + \frac{\beta}{2 \epsilon} \| \nabla \cdot \phi(\cdot, t)
      - \nabla \cdot \phi_h(\cdot, t) \|^2
      \\ \leq
      \| \chi(\cdot, t)  \|^2
      + \frac{\beta}{\epsilon} \| \rho(\cdot, t) \|^2 + 2E(t)
      \\ \leq
      \left[
        C_\Pi^2 |u|^2_k + \frac{\beta}{\epsilon} C_\pi^2 |\eta|^2_k
        \right] h^{2k}
      + 6 C_3\overline{\kappa}^{\frac{2\alpha}{2-\alpha}} h^{\frac{2k\alpha}{2-\alpha}}
  \end{multline}
  \label{thm:longtimeerror}
  \end{theorem}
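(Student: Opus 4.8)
The plan is to obtain Theorem~\ref{thm:longtimeerror} as an essentially immediate consequence of Theorem~\ref{thm:cd}, applied to the error equation~\eqref{eq:secondorderror}, followed by a routine triangle-inequality argument. The crucial observation is that~\eqref{eq:secondorderror}, tested with $v_h = \theta_{h,t}$, has exactly the structure of the difference equation~\eqref{eq:secondorder_discrete_differences} analysed in Section~\ref{ss:cd}: the quantity $\theta_h = \Pi\phi - \phi_h$ plays the role of the difference $\phi_{1,h} - \phi_{2,h}$; the damping term $g(\Pi\phi_t) - g(\phi_{h,t})$ plays the role of $g(\phi_{1,h,t}) - g(\phi_{2,h,t})$, so it inherits monotonicity~\eqref{eq:monotone-strict} and linear growth on differences~\eqref{eq:linearatinfinity} directly from the structural assumptions on $g$; and the consistency term $\tilde F = \chi_{tt} + \tfrac{f}{H\epsilon}\chi_t^\perp + g(\Pi\phi_t) - g(\phi_t)$ plays the role of the forcing perturbation $F = F_1 - F_2$. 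Note that $\tilde F$ depends only on the exact solution $\phi$ and its projection, not on the discrete error, so from the point of view of the error equation it is a legitimate ``known'' right-hand side.

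To invoke Theorem~\ref{thm:cd} I would check its hypotheses in this setting. Choosing the discrete initial data to be the $\Pi$-projection of the initial value of $\phi$ and the $\tfrac{1}{H}$-weighted $L^2$-projection of the initial value of $\phi_t$ forces $\theta_h(0) = 0$ and $\theta_{h,t}(0) = 0$, hence $E(0) = 0$; set $\delta_1 = 0$. The Lipschitz bound for $g$ together with the projection estimates~\eqref{eq:PiL2}--\eqref{eq:PiDiv} give $\|\tilde F\| \le \kappa(\phi) h^k$, and under the stated regularity $\phi, \phi_t \in L^\infty(0,\infty;H^k(\Omega))$ one has $\kappa(\phi) \le \overline\kappa$ uniformly in time, so $\|\tilde F\|^2 \le \overline\kappa^2 h^{2k} =: \delta_2$ for all $t$; the assumption~\eqref{eq:Jalpha} on $J$ holds for the power-law examples as already noted, and the condition ensuring the equivalence~\eqref{eq:equivalence} of $A(t)$ and $E(t)$ can be arranged as in the proof of Theorem~\ref{thm:cd}. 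The one point that genuinely requires care --- and which I expect to be the main (though mild) obstacle --- is observing that the proof of Theorem~\ref{thm:cd} uses only the difference equation, beginning from~\eqref{eq:energybound2}, and never the two individual equations; consequently the fact that $\Pi\phi$ satisfies~\eqref{eq:secondorder_discrete} only up to the consistency error $\tilde F$ is irrelevant, and that proof transfers verbatim. It then yields
\[
E(t) \le 3\bigl(\delta_1 + C_3\,\delta_2^{\alpha/(2-\alpha)}\bigr)
= 3 C_3 \bigl(\overline\kappa^2 h^{2k}\bigr)^{\alpha/(2-\alpha)}
= 3 C_3\, \overline\kappa^{\frac{2\alpha}{2-\alpha}}\, h^{\frac{2k\alpha}{2-\alpha}}
\]
for all $t > 0$, the first displayed bound of the theorem.

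Finally I would conclude by the triangle inequality and the commuting-projection property. Writing $u - u_h = (u - \Pi u) + (\Pi u - u_h) = -\chi + \theta_{h,t}$ and, using $\nabla\cdot\Pi\phi = \pi\nabla\cdot\phi$ to identify $\nabla\cdot\theta_h$ with $\zeta_h$, $\eta - \eta_h = (\eta - \pi\eta) + (\pi\eta - \eta_h) = -\rho + \zeta_h$, one obtains (after absorbing the weight $\tfrac{1}{H}$ and the scalar factors into constants)
\[
\tfrac12\|u-u_h\|^2 + \tfrac{\beta}{2\epsilon}\|\eta-\eta_h\|^2
\le \|\chi\|^2 + \tfrac{\beta}{\epsilon}\|\rho\|^2 + 2E(t).
\]
Bounding $\|\chi\|^2 \le C_\Pi^2 h^{2k}|u|_k^2$ and $\|\rho\|^2 \le C_\pi^2 h^{2k}|\eta|_k^2$ through~\eqref{eq:PiL2} and~\eqref{eq:piL2}, and inserting the bound on $E(t)$ from the previous step, gives the second displayed estimate; everything here beyond the structural observation is standard bookkeeping with projection estimates and norm equivalences.
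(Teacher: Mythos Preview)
Your proposal is correct and follows essentially the same route as the paper: apply Theorem~\ref{thm:cd} to the error equation with $\delta_1 = 0$ and $\delta_2 = \overline\kappa^2 h^{2k}$, then finish with the triangle inequality and projection estimates. If anything, you are more explicit than the paper in noting that the proof of Theorem~\ref{thm:cd} uses only the difference equation (so it applies even though $\Pi\phi$ is not an exact discrete solution), which is the one subtle point here.
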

Note that this estimate is necessarily suboptimal since
$\alpha \in (0,1)$.  
In the case of a superlinear power law, we have $\alpha = \frac{2}{p}$
for $p > 2$.  For the quadratic damping case with $p = 3$, we have
$\frac{\alpha}{2-\alpha} = \frac{1}{2}$ and hence we have an estimate
on the order of $h^{\frac{k}{2}}$, or $\sqrt{h}$ in the case of the
lowest-order method.  For the cubic power law, this becomes
$\sqrt[3]{h}$.  We do not claim that the present estimates are sharp,
but we are unaware of other techniques to give estimates holding
uniformly in time.

\section{Numerical results}
\label{sec:num}
In this section we present numerical experiments illustrating the
preceding theory.  All numerical results are obtained using the
open-source Firedrake package~\cite{rathgeber2016firedrake}, an
automated solution for the solution of partial differential equations.
We have used Crank-Nicholson with $\Delta t = 0.5 h$, where $h$ is the
characteristic mesh size, and lowest-order
Ravariat-Thomas spaces for most of the simulations in this paper,
although we do consider the convergence rate for the next-to-lowest
order method as well.

In all of our cases, we consider $\epsilon = \beta = 0.1$ and $f=0$.
We consider the linear damping model $g(u) = Cu$ with $C=10$ as
in~\cite{CoKi}.  Additionally, we consider quadratic damping with $g(u) =
C|u| u$ and cubic damping with $g(u) = C|u|^2 u$, also with $C=10$.

%% \subsection{Long-time stability}
%% First, we demonstrate the long-time stability of this system under
%% forcing for each of the linear, quadratic, and cubic damping profiles.
%% We set the coefficients $\epsilon=\beta=0.1$ and the rest to unit value,
%% the initial conditions to zero, 
%% and impose a constant forcing function of $F=(10,0)$.  We compute the
%% solution until $T=1000$ on a $20 \times 20$ mesh divided into right
%% triangles.  In Figure~\ref{fig:stability}, we plot the energy as a function of time
%% for each of the three damping functions.  In the (strongest) linear
%% case, the energy flattens to a steady value relatively quickly.  The
%% energy in both nonlinear cases oscillates around the same value as the
%% steady linear case, with an amplitude that seems to be gently
%% decreasing over time.

%% \begin{figure}
%%   \centerline{\includegraphics[width=8cm]{images/stability.png}}
%%     \caption{Energy for $t \in [0,100]$ for linear, quadratic, and cubic
%%       damping, with zero initial condition and $F=(1,0)$.}
%%   \label{fig:stability}
%% \end{figure}

\subsection{Damping rates and synchronization}
Now, we demonstrate numerically the effect of the damping function
$g$ on the rate at which energy decays in an unforced system.  We
consider the unit square and a random initial condition with unit
energy and such that $\eta$ has zero mean and run the unforced system on a
\(20 \times 20\) mesh divided into right triangles until $T=100$.  We show
the results of damping in Figure~\ref{fig:damping}.  Each curve shows
the (eventual) rates indicated in our earlier theory, although a
small nonzero energy remains after some time in the linear case.
We also, for a fixed initial condition, reran the simulations with
decreasing time step, and observed that this residual energy decreases
like $\mathcal{O}(\Delta t^2)$, so it is likely an artifact of the
time discretization.

\begin{figure}
  \centerline{\includegraphics[width=8cm]{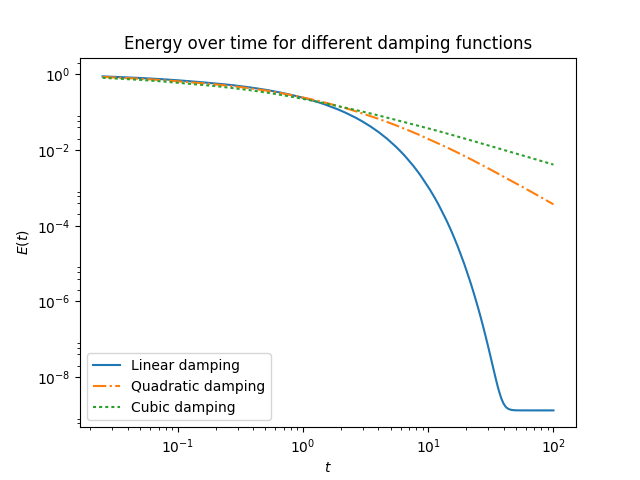}}
  \caption{Damping rates, starting from random initial data.  After a
    start-up period, the energy decays like $1/t$ for the cubic
    damping, $1/t^2$ for quadratic, and exponentially for the linear
    damping.}
  \label{fig:damping}
\end{figure}

In Figure~\ref{fig:sync}, we consider the case of two distinct random
initial conditions, subjecting both the the
forcing $(F,v) = \frac{\beta}{\epsilon^2} \sin(t) \left(xy, \nabla
\cdot v\right)$ and measure the energy of the differences between
solutions over time $[0, 100]$.  As with the damping, the eventual
observed rates match those predicted theory, although
there is a residual energy like in the damping example.

\begin{figure}
  \centerline{\includegraphics[width=8cm]{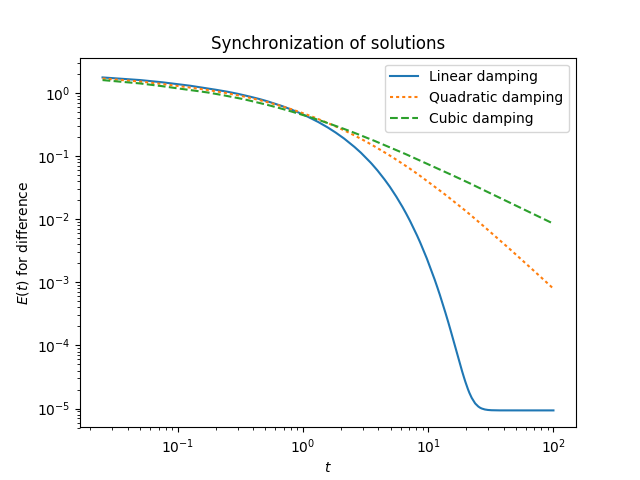}}
  \caption{Synchronization rates, starting from random initial data.  After a
    start-up period, the energy of the difference decays like $1/t$
    for the cubic damping, $1/t^2$ for quadratic, and
    exponentially for the linear damping until a residual energy of
    $\mathcal{O}(\Delta t^2)$ is reached.}
  \label{fig:sync}
\end{figure}

\subsection{Convergence}
We used the method of manufactured solutions on the unit square, setting the problem coefficients to unit value and choosing forcing functions to make
\begin{equation}
  \begin{split}
    u(x, y, t) & = \cos(\pi t) \begin{bmatrix}
      \sin(\pi x) \cos (\pi y), &
      \cos(\pi x) \sin(\pi y) \end{bmatrix}^T \\
    \eta(x, y, t) & = \sin(\pi x) \sin(2 \pi y) \cos(\pi t)
  \end{split}
\end{equation}

In each case, we computed the solution until $T=10$, measuring the
$L^2$ error of both $u$ and $\eta$ at each time step.
%% In
%% Figure~\ref{err} and~\ref{err}, we see the plot of the maximum error
%% over time versus 
%% the mesh size $h$.
We observed full first-order convergence, which is
predicted in the linear case.  In the nonlinear cases, it is not clear
whether Theorem~\ref{thm:longtimeerror} is suboptimal, the calculation has not
run on a long enough time horizon, or there is some other
consideration.  At any rate, we have confirmed similar second-order
convergence when using the next-to-lowest order Raviart-Thomas
element.
%% \begin{figure}
%%   \centerline{\includegraphics[width=6cm]{images/uerr_max_1.png}
%%   \includegraphics[width=6cm]{images/perr_max_1.png}}
%%   \caption{Maximum $L^2$ error in momentum (left) and elevation
%%     (right) over time interval $[0, 10]$ using lowest-order
%%     Raviart-Thomas discretization.  First-order convergence is
%%     observed in both variables.}
%%   \label{err}
%% \end{figure}

\section{Conclusions}
In this paper we introduced several results that underpin the
application of compatible finite element spaces to barotropic tide
modelling with nonlinear drag terms that are used in barotropic global
tide models. By importing results from nonlinear PDEs, we were able to
show that the numerical discretisation has a global attracting
solution. Calculating this solution is the goal of barotropic tide
modelling, since the Earth's tides are assumed to have being occuring
on a long enough time scale that memory of the initial conditions or
past changes in topography are not relevant. The proof requires some
assumptions of linear growth at infinity of the damping term, but in
practice any reasonable damping model can be adjusted a posteriori to
have linear growth at values that are never attained in the model
solutions. We then provided two error analyses. The first is over
finite time intervals, and predicts optimal scaling with mesh
resolution, but with constant of proportionality growing exponentially
in time. The second is global in time, but we obtain a suboptimal
scaling with mesh resolution; numerical experiments confirm that our
estimate is not sharp. However, our analysis does show that the
numerical attracting solution converges to the true attracting solution
as the mesh is refined, i.~e.~we have a convergent numerical solution
to the barotropic tidal prediction problem. 

\bibliographystyle{plain}
\bibliography{bib}

\end{document}